\pdfoutput=1

\documentclass[12pt]{article}
\usepackage{graphicx}
\usepackage{setspace}
\usepackage{amssymb}
\usepackage{amsmath}
\usepackage{bbm} 
\usepackage{bm} 
\usepackage{subfigure}
\usepackage{cite} 
\usepackage{hyperref}







\setlength{\textwidth}{6.5in}
\setlength{\textheight}{8.5in}

\newtheorem{theorem}{Theorem}
\newtheorem{proposition}[theorem]{Proposition}
\newtheorem{lemma}[theorem]{Lemma}
\newtheorem{corollary}[theorem]{Corollary}
\newenvironment{proof}[1][Proof]{\textbf{#1.} }{\ \rule{0.5em}{0.5em}}

\def\I{{\rm i}}

\def\D{{\rm d}}


\begin{document}

\title{Spatio-spectral limiting on hypercubes: eigenspaces}
\author{Jeffrey A. Hogan\\
School of Mathematical and Physical Sciences\\
University of Newcastle\\
Callaghan NSW 2308 Australia\\
email: {\tt jeff.hogan@newcastle.edu.au} \and
Joseph D. Lakey\\
Department of Mathematical Sciences\\
New Mexico State University\\
Las Cruces, NM 88003--8001\\
email: {\tt jlakey@nmsu.edu}}
\abstract{
The operator that first truncates to a neighborhood of the origin in the spectral domain then truncates
to a neighborhood of the origin in the spatial domain is investigated in the case of Boolean cubes. 
This operator is self adjoint on a space of bandlimited signals.   The eigenspaces of this iterated projection operator are 
studied and are shown to depend fundamentally on the neighborhood structure of the cube when regarded as a 
metric graph with path distance equal to Hamming distance.
\smallskip\noindent
{\bf keywords:\,}{Boolean cube, Slepian sequence, 
time and band limiting, graph Laplacian, graph Fourier transform
}

\smallskip\noindent
{\bf AMS subject classifications:\,}{94A12, 94A20, 42C10, 65T99}} 
\maketitle

The main result of this work identifies eigenspaces of joint spatio--spectral limiting on Boolean hypercubes.
Corresponding distributions of eigenvalues will be described in other work.
Spatial limiting refers, broadly, to truncation of a function to a neighborhood of a point. Spectral limiting refers to restriction
to certain eigenmodes.  The theory of joint spatio--spectral limiting on $\mathbb{R}$---the theory of time and band limiting---was developed in a series of works by Landau, Slepian and Pollak  \cite{slepian_pollak_I,landau_pollak_II,landau_pollak_III,slepian_IV,slepian_V_1978} that appeared in the \emph{Bell Systems Tech. Journal} in the 1960s. This theory first identified the eigenfunctions of time- and band-limiting operators, compositions $P_\Omega Q_T$ where $(Q_T f)(t)=\mathbbm{1}_{[-T,T]}(t) \, f(t)$ and $(P_\Omega f)(t)=(Q_{\Omega/2}\widehat{f})^{\vee}(t)$ (here $\widehat{f}(\xi)=\int_{-\infty}^\infty f(u)\, e^{-2\pi i u\xi}\, d\xi$), \cite{slepian_pollak_I}, and eventually quantified the distribution of eigenvalues of $P_\Omega Q_T$ \cite{landau_widom}. 
 Extensions to $\mathbb{R}^n$ and the compact--discrete setting were developed by Slepian \cite{slepian_IV,slepian_V_1978}
 and finite dimensional versions on $\mathbb{Z}_N$ by Gr\"unbaum  \cite{grunbaum_toeplitz,grunbaum_linalg_1981} and by Xu and Chamzas  \cite{xu_chamzas_dpss}, cf., \cite{jain_ranganath_1981}.  Since 2000, besides refined study of 
 eigenfunctions of time and band limiting, e.g.,\cite{xiao_rokhlin_yarvin_pswf,boyd_pswf_2003_2,boyd_2005,osipov2013prolate,Osipov2014108,sc12-1,Bonami2014229,Bonami2016,zhu_etal_2017},
 a variety of applications of time- and band-limiting methods to  communication channel modeling, e.g., \cite{zemen_mecklenbraucker_2005,farrell_strohmer_2011},  multiband signals \cite{davenport_wakin_2012},
 irregular sampling \cite{bass2013},  and super-resolution limits \cite{CPA:CPA21455}, have been developed.
    These developments all make important use of Euclidean harmonic analysis (or analogous methods on the circle, the integers, and the integers modulo $N$).

 More recently, aspects of the theory developed in classical settings have been developed in more general settings such as the 2-sphere by Simons et al.~\cite{wieczorek_simons,simons_2012},  locally compact abelian groups by Zhu and Wakin \cite{zhu_wakin_abelian} and, at least in an experimental sense, finite graphs $G=(V,E)$, see Tsitsvero et al.~\cite{tsitsvero_etal_2015,tsitsvero_etal_2015_2}.  It is  unreasonable to hope for an explicit description of eigenvectors of joint spatiospectral limiting on a graph $G$
unless $G$ has a particular structure with ample symmetry, such as a when $G$ is the Cayley graph of a finite abelian group, see \cite{Hogan20172}.  Our main result, Cor.~\ref{cor:qpq_evec} identifies the eigenspaces of spatio-spectral limiting operators on hypercubes,
which are identified with Cayley graphs of $\mathbb{Z}_2^N$.

\section{The Boolean Hypercube, space limiting and band limiting \label{sect:boolean}}

The Boolean hypercube $\mathcal{B}_N$ is the set $\{0,1\}^N$.  It can be regarded as the Cayley graph of the  group of $(\mathbb{Z}_2)^N$ with symmetric generators $e_i$ having entry 1 in the $i$th coordinate and zeros in the other $N-1$ coordinates.  With componentwise addition modulo one, vertices corresponding to elements of $(\mathbb{Z}_2)^N$ share an edge precisely when their difference in $(\mathbb{Z}_2)^N$ is equal to $e_i$ for some $i$.
It will be convenient to index vertices  of $\mathcal{B}_N$  by subsets $S$  of $\{1,\dots, N\}$ according to coordinates having bit-value one. That is, if $\mathbf\epsilon=(\epsilon_1,\dots,\epsilon_N)\in \{0,1\}^N$ has $r$ nonzero \emph{bit indices} $\beta_1,\dots, \beta_r$ in $\{1,\dots, N\}$, we identify  $S=\{\beta_1,\dots,\beta_r\}$ as those indices such that $\epsilon_{\beta_i}=1$.
Distance between vertices is defined by Hamming distance---the number of differing coordinates---which is equal to the path distance when each edge has unit weight.   The unnormalized graph Laplacian of $\mathcal{B}_N$ is the matrix $L$, thought of as a function on 
$\mathcal{B}_N\times \mathcal{B}_N$, with $L_{SS}=N$ and $L_{RS}=-1$ if $R\sim S$, that is, $R$ and $S$ are nearest neighbors---they differ in a single coordinate---and $L_{RS}=0$ otherwise.   Thus $L=NI-A$ where $A$ is the adjacency matrix $A_{RS}=1$ if $R\sim S$ and $A_{RS}=0$ otherwise. It will be convenient to carry out our analysis on the unnormalized Laplacian, though the normalized Laplacian   $\mathcal{B}_N$ is $\mathcal{L}=L/N$ is preferred for  analysis asymptotic in $N$. We denote by $|S|$ the number of elements of
$S$, equivalently, the number of ones in the element of $\mathcal{B}_N$ determined by $S$.   The 
eigenvalues of $L$ are $0,2,\dots, 2N$ where $2K$ has multiplicity $\binom{N}{K}$.  The corresponding eigenvectors,
which together form the graph (and group) Fourier transform, are the 
Hadamard vectors defined by $H_S(R)=(-1)^{|R\cap S|}$ . The normalized Hadamard vectors
$\bar{H}_S=H_S/2^{N/2}$  together make the Fourier transform unitary. 
Lemma \ref{lem:hadamard_eigenvalue}, which states that $H_S$ is an eigenvector of $L$ with eigenvalue $2|S|$, is well known both in group theory and graph theory.
A combinatorial proof  compares intersection parities $|R\cap S|{\rm mod}\, 2$ and  $|P\cap S|{\rm mod}\, 2$
among  $P\sim R$, e.g., \cite{Hogan20172}.

\begin{lemma}\label{lem:hadamard_eigenvalue} 
$H_S$ is an eigenvector of $L$ with eigenvalue
$2|S|$.
\end{lemma}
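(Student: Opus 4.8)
The plan is to verify the eigenvalue equation $LH_S=2|S|\,H_S$ pointwise, evaluating both sides at an arbitrary vertex $R$ and using the decomposition $L=NI-A$. Since $(LH_S)(R)=N\,H_S(R)-(AH_S)(R)$ and $(AH_S)(R)=\sum_{P\sim R}H_S(P)$, the whole computation reduces to summing $H_S$ over the $N$ nearest neighbors of $R$.

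First I would split the neighbors of $R$ according to which coordinate is toggled: a neighbor $P$ of $R$ is exactly a set of the form $P=R\,\triangle\,\{i\}$ for some $i\in\{1,\dots,N\}$. If $i\in S$, then $|P\cap S|$ and $|R\cap S|$ differ by one (the coordinate $i$ is either added to, or removed from, $R\cap S$), so $H_S(P)=(-1)^{|P\cap S|}=-(-1)^{|R\cap S|}=-H_S(R)$. If $i\notin S$, then $P\cap S=R\cap S$, so $H_S(P)=H_S(R)$. There are precisely $|S|$ indices $i$ of the first kind and $N-|S|$ of the second. Hence
\[
(AH_S)(R)=(N-|S|)\,H_S(R)-|S|\,H_S(R)=(N-2|S|)\,H_S(R),
\]
and therefore $(LH_S)(R)=N\,H_S(R)-(N-2|S|)\,H_S(R)=2|S|\,H_S(R)$. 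As $R$ was arbitrary, $H_S$ is an eigenvector of $L$ with eigenvalue $2|S|$.

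There is no genuine obstacle here; the only point requiring care is the parity bookkeeping, namely that toggling a coordinate that lies in $S$ always flips the parity of $|R\cap S|$ (whether that coordinate was previously present in $R$ or not), whereas toggling a coordinate outside $S$ never affects it. An alternative route, which I would mention but not pursue, is to note that $H_S=\bigotimes_{i=1}^N\chi_i$ is a tensor product of one-dimensional characters of $\mathbb{Z}_2$ (with $\chi_i$ nontrivial exactly when $i\in S$) and that $A=\sum_{i=1}^N A_i$, where $A_i$ acts by the coordinate swap in the $i$th tensor factor and by the identity on the others; each $A_i$ has eigenvalue $+1$ on the trivial factor and $-1$ on the nontrivial one, and summing the $N$ contributions reproduces $N-2|S|$. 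The direct neighbor count above is the cleanest presentation.
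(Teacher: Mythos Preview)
Your proof is correct and is precisely the combinatorial argument the paper alludes to: comparing the parities $|R\cap S|\bmod 2$ and $|P\cap S|\bmod 2$ for $P\sim R$ to evaluate $(AH_S)(R)$. The paper does not spell out the details (it cites them as well known), but your neighbor-by-neighbor parity count is exactly the intended route.
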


We denote by $H$ the matrix with columns $H_S$, $S\subset\{1,\dots, N\}$. Up to an indexing of the columns and a factor $2^{N/2}$, $H$ is the \emph{Hadamard matrix} of order $N$ obtained by taking the $N$-fold tensor product of the matrix 
\begin{tiny}$\left(\begin{array}{cc} 1 & 1 \\ 1& -1\end{array}\right)$.\end{tiny}
On $\mathbb{R}$, the timelimiting operator is $(Q_T f)(t)=\mathbbm{1}_{[-T,T]}(t) f(t)$ which cuts off $f$ in a neighborhood
of the origin.  In $\mathcal{B}_N$, cutting off in a (symmetric) neighborhood of the origin---the vertex corresponding to $(0,\dots,0)\in\mathbb{Z}_2^N$---means multiplying by the 
characteristic function of a Hamming ball of a given radius. The closed Hamming ball $B(0,r)$ of radius $r$ consists
of those $R\in \mathcal{B}_N$ such that $|R|\leq r$.  The Hamming sphere $\Sigma_r$ consists of those vertices having Hamming distance $r$ from the origin. They are indexed by the $r$-element subsets of $\{1,\dots, N\}$ and thus have $\binom{N}{r}$ vertices.  Consequently, $B(0,K)$ has ${\rm dim}(K)=\sum_{k=0}^K \binom{N}{k}$ vertices.

On $\mathcal{B}_N$, one denotes $Q=Q_K$ by 
$(Qf)(S)=\mathbbm{1}_{B(0,K)} f(S)$.
On $\mathbb{R}$, it is typical to denote the $\Omega$-bandlimiting
operator $(P_\Omega f)(t)= (\widehat{f}(\xi)\mathbbm{1}_{[-\Omega/2,\Omega/2]}(\xi))^\vee$ where $\widehat{f}$ denotes the Fourier transform of $f$. By analogy one can define a bandlimiting operator $P=P_K$ by $P=\bar{H}Q\bar{H}$.  Equivalently, $P_K$ is the projection onto the span of those $H_S$ with $|S|\leq K$.  By analogy with Paley--Wiener spaces on $\mathbb{R}$, we refer to the span of $\{H_S: |S|\leq K\}$ as the Boolean \emph{Paley--Wiener} space ${\rm BPW}_K$, which has dimension ${\rm dim}(K)$.

In this work we seek to characterize the eigenspaces of the joint projection operator $P_{K_1} Q_{K_2}P_{K_1}$
which is the $\mathcal{B}_N$-analogue of the \emph{time- and band-limiting operator} $P_\Omega Q_T P_\Omega$
on $\mathbb{R}$.  To simplify the presentation somewhat we will restrict to the case of fixed $K_1=K_2=K$. The eigenvalues of $P_\Omega Q_T P_\Omega$ on $\mathbb{R}$ are non-degenerate, as are the eigenvalues of their finite dimensional ($\mathbb{Z}_N$) analogues \cite{slepian_pollak_I,grunbaum_toeplitz}.  These facts are tied to the simple
linear geometry of $\mathbb{R}$. In contrast, the eigenvalues of $P_{K} Q_{K}P_{K}$ on $\mathcal{B}_N$
have high multiplicity, namely 
$\binom{N}{k}-\binom{N}{k-1}$, $0\leq k\leq K$, 
and there are $(K+1-k)$ distinct eigenvalues with such multiplicity.  
Summation by parts gives
\[ \sum_{k=0}^K (K+1-k) \left(\binom{N}{k}-\binom{N}{k-1}\right) =\sum_{k=0}^K \binom{N}{k} ={\rm dim}\, (K) \, .
\]
Thus the eigenspaces of $PQP$ provide a decomposition of ${\rm BPW}_K$.  This decomposition will be done in the frequency domain by identifying
certain spaces of vectors that are invariant under the adjacency matrix $A$, and expressing the bandlimiting operator as a polynomial in $A$.
Any vector in this space is described in terms of at most $N+1$ coefficients and the eigenvectors of $PQP$ are determined by certain eigencoefficients.
As such, the eigenspace problem is reduced from a problem of finding eigenvectors of a matrix of size on the order of $2^N$ to that of finding eigenvectors
of a matrix of size on the order of $N$.

Before addressing this problem for $PQP$, we will solve the corresponding problem for a certain second-order difference operator analogue of the so-called
\emph{prolate differential operator} on $\mathbb{R}$,
\begin{equation}\label{eq:pdo} {\rm PDO}=\frac{\D}{\D t}(t^2-T^2) \frac{\D}{\D t}+ (\pi\Omega)^2 t^2\, 
\end{equation}
that commutes with the operator that truncates $[-1,1]$ and bandlimits to $[-\Omega/2,\Omega/2]$. 
Up to dilation, the eigenfunctions of PDO (and hence of time and bandlimiting) are the so-called  prolate spheroidal wave functions (PSWFs) \cite{slepian_pollak_I,hogan_lakey_tbl}.  
The structure of PDO allows for efficient numerical computation of the PSWFs.  
It is reasonable to seek a parallel route  to identify eigenvectors of $PQP$ in the Boolean case.

The identity $(\frac{\D}{\D t} f)^{\wedge}(\xi)= 2\pi \I \xi\hat{f}(\xi)$ means that, up to a constant,
differentiation $\sqrt{-\Delta}$, is conjugation of multiplication by $\xi$, the corresponding eigenvalue, with the Fourier transform.
The operator $D=\bar{H}T\bar{H}$ that conjugates the diagonal matrix $T$ whose diagonal elements are square roots of eigenvalues of $L$,
by $H$ can thus be regarded, up to normalization, as a Boolean analogue of $d/dt$ while 
$T$ can also be regarded as an analogue, again up to normalization, of multiplication by $t$. 
The \emph{Boolean difference operator}
\begin{equation}\label{eq:bdo} ({\rm BDO})\qquad D(\alpha I-T^2) D+\beta T^2\, 
\end{equation}
then can be regarded as a Boolean analogue of PDO in (\ref{eq:pdo}) where $\alpha$ and $\beta$ play normalizing roles.

We will show that the eigenspaces of BDO are identified at the coefficient level with spaces of eigenvectors of a certain tri-diagonal matrix.  Unlike the case of the real line, BDO and $PQP$ do not commute \cite{Hogan20172}.
 Nonetheless, BPDO and $PQP$ share invariant subspaces that enables similar analysis for each, but 
the eigenspaces of $PQP$ are more complicated because the coefficient matrix in question is not tri-diagonal.

\subsection{Dyadic lexicographic order}

In order to represent linear operators on $\ell^2(\mathcal{B}_N)$ one needs a way of listing the elements of 
$\mathcal{B}_N$ in a particular order.  One way is to start with $S\sim (\epsilon_0,\dots, \epsilon_{N-1})$ and associate
$n(S)=\sum_{j=0}^{N-1} \epsilon_j 2^j$.  However, to describe operations on  $\mathcal{B}_N$ relative to Hamming distance it is preferable 
to work with  an order that respects  Hamming distance to the origin.  The dyadic lexicographic order is defined on elements of $
\mathcal{B}_N$ as follows. Recall that $R$ is identified with a subset of $\{1,\dots, N\}$ so that the elements of $R$ are those $\beta$ such 
that $\epsilon_{\beta}=1$ when $R\sim (\epsilon_0,\dots, \epsilon_{N-1})$. The dyadic lexicographic order ``$\lesssim$'' stipulates $R\lesssim S$ if $|R|<|S|$ and,
when $|R|=|S|$ if, in the smallest bit index $\beta$ in which $R$ and $S$ differ, one has $\beta\in S\setminus R$.
In this ordering, the adjacency matrix $A$ can be represented as a block symmetric matrix with nonzero entries
restricted to the blocks corresponding to products of Hamming spheres $\Sigma_r\times\Sigma_{r\pm 1}$.  

\begin{figure}[ht]
\centering 
\includegraphics[width=3.5in,height=2.2in]{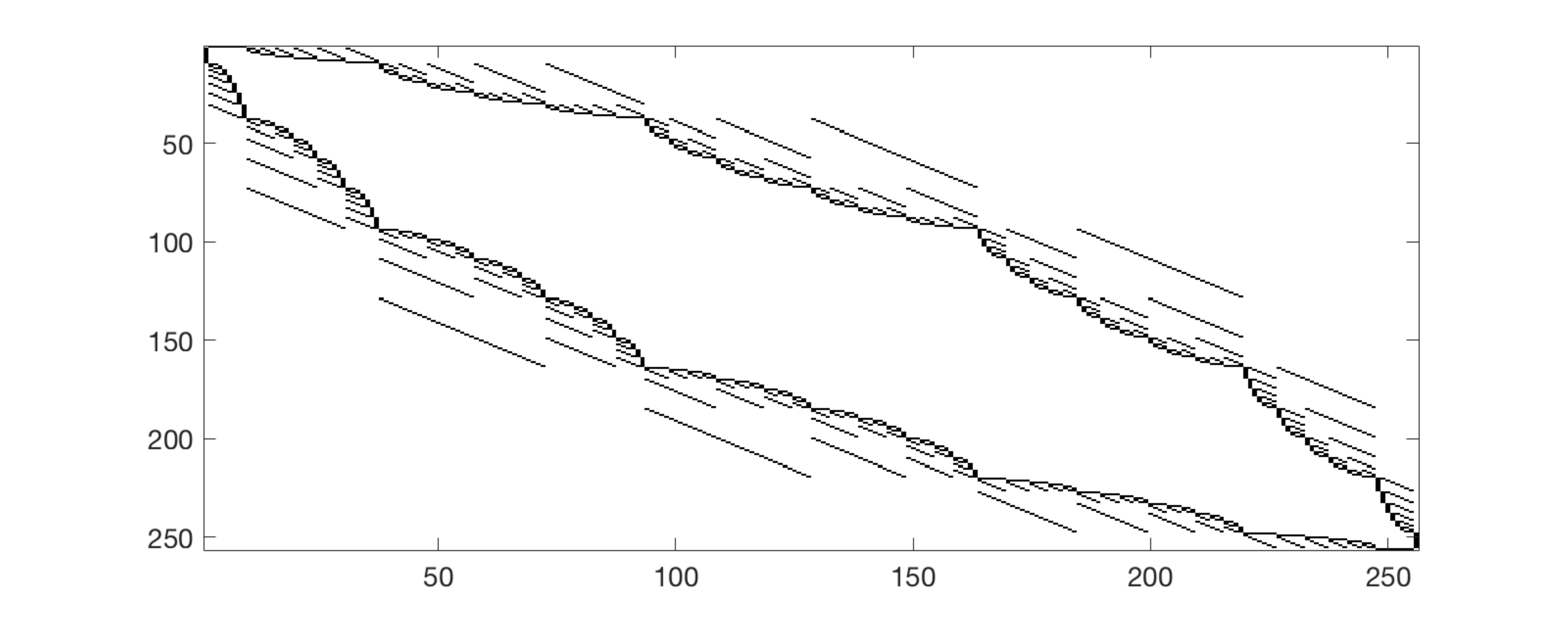}
\caption{Adjacency matrix for $N=8$ in dyadic lexicographic order.}
\label{fig:norm_pqp_bde_7_3}
\end{figure}

\section{Hamming sphere analysis of adjacency}

\subsection{Compositions of outer and inner adjacency maps}
The adjacency matrix $A$ of $\mathcal{B}_N$ can be written $A=A_{+}+A_{-}$ where $A_-=A_+^T$ and
$A_+$ is lower triangular when expressed in dyadic lexicographic order. 
Thus $A_+$ maps data on $r$-spheres to data on $(r+1)$-spheres while $A_-$ maps from $r$-spheres to $(r-1)$-spheres.
We may refer to $A_+$ and  $A_-$ as the respective \emph{outer} and \emph{inner adjacency maps}. 

The space $\ell^2(\Sigma_r)$ of vectors supported on $\Sigma_r$ has an orthogonal decomposition 
\[\ell^2(\Sigma_r)=A_+ \ell^2(\Sigma_{r-1})\oplus \mathcal{W}_r
\]
where $\mathcal{W}_r$ is the orthogonal complement of $A_+ \ell^2(\Sigma_{r-1})$ inside $\ell^2(\Sigma_r)$.
Applying such a decomposition at each scale, one obtains a type of multiscale decomposition:
\[\ell^2(\Sigma_r)=A_+ \ell^2(\Sigma_{r-1})\oplus \mathcal{W}_r=\cdots= A_+^r\mathcal{W}_0\oplus A_+^{r-1}\mathcal{W}_1\oplus\cdots\oplus \mathcal{W}_r\, .
\]
Since the origin is a singleton, $\mathcal{W}_0$ is simply the constants at the origin.

The outer and inner adjacencies do not commute.
The following theorem quantifies composition of $A_-$ with a power of $A_+$.

\begin{theorem}\label{thm:wradjacency}
Let $W\in\mathcal{W}_r$ and $k$ such that $r+k<N$.  Then
\[A_-A_+^{k+1} W=[(N-2r)+(N-2(r+1))+\cdots +(N-2(r+k))]\,  A_+^k W
\, .
\]
\end{theorem}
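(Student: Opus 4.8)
The plan is to reduce everything to two facts about the inner and outer adjacency maps restricted to a single Hamming sphere, and then run a short induction on $k$. First I would observe that, relative to the orthogonal splitting $\ell^2(\mathcal{B}_N)=\bigoplus_s\ell^2(\Sigma_s)$, the maps $A_+\colon\ell^2(\Sigma_s)\to\ell^2(\Sigma_{s+1})$ and $A_-\colon\ell^2(\Sigma_{s+1})\to\ell^2(\Sigma_s)$ are mutually adjoint; this is immediate from $A_-=A_+^{T}$ together with the formulas $(A_+f)(T)=\sum_{R\subset T,\,|R|=|T|-1}f(R)$ and $(A_-g)(R)=\sum_{T\supset R,\,|T|=|R|+1}g(T)$. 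Since $\mathcal{W}_r$ is by definition the orthogonal complement of $A_+\ell^2(\Sigma_{r-1})$ inside $\ell^2(\Sigma_r)$, for $W\in\mathcal{W}_r$ and every $g\in\ell^2(\Sigma_{r-1})$ we get $\langle A_-W,g\rangle=\langle W,A_+g\rangle=0$, so $A_-W=0$; in other words $\mathcal{W}_r=\ell^2(\Sigma_r)\cap\ker A_-$.

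The second fact is the commutation relation $A_-A_+-A_+A_-=(N-2s)I$ on $\ell^2(\Sigma_s)$, which I would prove by a direct count of matrix entries. Fix $S$ with $|S|=s$. A length-two walk $S\to T\to S'$ with $|T|=s+1$ is possible only when either $S'=S$ --- and then $T=S\cup\{a\}$ ranges over the $N-s$ choices of $a\notin S$ --- or $S'=(S\setminus\{b\})\cup\{a\}$ for some $b\in S$, $a\notin S$, in which case $T=S\cup\{a\}$ is forced; hence $(A_-A_+)_{SS}=N-s$ while the off-diagonal entries of $A_-A_+$ equal $1$ precisely on the sets $(S\setminus\{b\})\cup\{a\}$. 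The same count with $|T|=s-1$ gives $(A_+A_-)_{SS}=s$ with exactly the same off-diagonal pattern of $1$'s. Subtracting, the off-diagonal contributions cancel and only $(N-s)-s=N-2s$ remains on the diagonal. (This is the ladder structure underlying the Hadamard eigenvalues of Lemma~\ref{lem:hadamard_eigenvalue}; one could instead extract it from that lemma, but the count is self-contained.)

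With these in hand the theorem follows from the slightly more general claim that, for $W\in\mathcal{W}_r$ and $r+k\le N$, one has $A_-A_+^{k}W=c_k\,A_+^{k-1}W$ with $c_k:=\sum_{j=0}^{k-1}\bigl(N-2(r+j)\bigr)$, proved by induction on $k$. For $k=0$ this is $A_-W=0$ against the empty sum. For the inductive step, write $A_-A_+^{k+1}W=(A_-A_+)(A_+^kW)$; since $A_+^kW\in\ell^2(\Sigma_{r+k})$, applying the commutation relation on that sphere gives $A_+A_-(A_+^kW)+(N-2(r+k))\,A_+^kW$, and the inductive hypothesis converts the first summand into $c_k\,A_+^{k}W$, so the total is $\bigl(c_k+N-2(r+k)\bigr)A_+^kW=c_{k+1}A_+^kW$. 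Taking index $k+1$ in place of $k$ is exactly the assertion, with bracketed coefficient $c_{k+1}=(N-2r)+(N-2(r+1))+\cdots+(N-2(r+k))$; the hypothesis $r+k<N$ guarantees $A_+^{k+1}W$ stays supported within $\mathcal{B}_N$. The one place that requires care is the entry count establishing the commutation relation --- the remainder is formal bookkeeping.
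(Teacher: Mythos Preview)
Your proof is correct and follows essentially the same path as the paper's: establish the commutator identity $A_-A_+-A_+A_-=(N-2s)I$ on $\ell^2(\Sigma_s)$ by a direct entry count (this is the paper's Lemma~\ref{lem:commutator}), then induct on $k$. The one minor streamlining is in the base case: the paper verifies $A_-A_+W=(N-2r)W$ by an explicit combinatorial computation that invokes the orthogonality condition defining $\mathcal{W}_r$, whereas you obtain the same thing in one stroke by first noting $A_-W=0$ via the adjointness of $A_\pm$ and then letting the commutator relation absorb the base case.
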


We denote by $m(r,k)$ the multiplier of $A_+^k W$, that is,
\begin{equation}\label{eq:mrk}m(r,k)=[(N-2r)+(N-2(r+1))+\cdots +(N-2(r+k))] \, . \end{equation}
The proof uses the following lemma whose proof will be given after that of Thm.~\ref{thm:wradjacency}.

\begin{lemma} \label{lem:commutator} Let $C=[A_-,A_+]=A_-A_+-A_+A_- $ be the commutator of $A_-$ and $A_+$. Then for each $r$,
the restriction of $C$ to $\Sigma_r$ is multiplication by $N-2r$.
\end{lemma}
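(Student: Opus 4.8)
The plan is to compute $C$ directly in the standard basis $\{\delta_S : |S|=r\}$ of $\ell^2(\Sigma_r)$, where $\delta_S$ is the point mass at the vertex $S$. Since the neighbours of $S$ are exactly the sets $S\,\triangle\,\{j\}$, adjoining an index $j\notin S$ moves up one Hamming sphere while deleting an index $i\in S$ moves down one, so $A_+\delta_S=\sum_{j\notin S}\delta_{S\cup\{j\}}$ and $A_-\delta_S=\sum_{i\in S}\delta_{S\setminus\{i\}}$. The whole argument is then a comparison of the two iterated sums $A_-A_+\delta_S$ and $A_+A_-\delta_S$.

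First I would expand
\[
A_-A_+\delta_S=\sum_{j\notin S}\ \sum_{i\in S\cup\{j\}}\delta_{(S\cup\{j\})\setminus\{i\}},
\]
and split the inner sum into the term $i=j$, which returns $\delta_S$ with total multiplicity $N-r$, and the terms $i\in S$, which produce the vertices $(S\setminus\{i\})\cup\{j\}$ indexed by pairs $(i,j)$ with $i\in S$ and $j\notin S$. Symmetrically,
\[
A_+A_-\delta_S=\sum_{i\in S}\ \sum_{j\notin S\setminus\{i\}}\delta_{(S\setminus\{i\})\cup\{j\}},
\]
where splitting according to $j=i$ (which returns $\delta_S$, with total multiplicity $r$) versus $j\notin S$ produces the same family of vertices $(S\setminus\{i\})\cup\{j\}$ with $i\in S$ and $j\notin S$. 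Subtracting, the two off-diagonal families cancel termwise, leaving $C\delta_S=(N-r)\delta_S-r\,\delta_S=(N-2r)\delta_S$; since $S\in\Sigma_r$ was arbitrary, $C|_{\Sigma_r}=(N-2r)I$.

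The one place that needs care is the matching of the off-diagonal terms: one must check that in $A_-A_+\delta_S$ each vertex of the form $(S\setminus\{i\})\cup\{j\}$ with $i\in S$ and $j\notin S$ arises exactly once (the representation ``add $j$, then delete $i$'' being forced), and likewise in $A_+A_-\delta_S$ via ``delete $i$, then add $j$'', so that the two double sums list the identical multiset of basis vectors, each with coefficient $1$. This small combinatorial bookkeeping is the main — indeed essentially the only — obstacle; once it is in hand, the diagonal count $N-r$ versus $r$ is immediate and no appeal to the Hadamard diagonalisation of $L$ is needed.
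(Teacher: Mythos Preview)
Your argument is correct and is essentially the same as the paper's: both compute $A_-A_+$ and $A_+A_-$ on $\Sigma_r$ by counting two-step paths through $\Sigma_{r+1}$ versus $\Sigma_{r-1}$, split into the diagonal contributions $(N-r)$ and $r$ and the off-diagonal ``swap one bit'' terms indexed by $(i,j)$ with $i\in S$, $j\notin S$, and observe the latter cancel. The only cosmetic difference is that the paper evaluates $(CV)(R)$ for an arbitrary $V$ at a fixed vertex $R$, whereas you compute $C\delta_S$ in the standard basis; these are dual formulations of the same combinatorial count.
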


\begin{proof}[Proof of Thm.~\ref{thm:wradjacency}]
We first prove the cases $k=0$ and $k=1$. The general case is then proved by induction on $k$ for
each fixed $r$. The case $k=1$ thus does not require separate proof, but we include that case to illustrate
how the orthogonality condition defining $\mathcal{W}_r$ propagates through powers of $A_+$.
For $k=0$ the claim is that $A_-A_+W=(N-2r) W$. If $R\in\Sigma_r$ then the value 
$(A_-A_+W)(R)=\sum_{S\sim\sim R} W_S$, counting multiplicity, where $S\sim\sim R$ means that there
is a path of length two from $S$ to $R$ through a vertex in $\Sigma_{r+1}$, and the multiplicity of the value
$W_S$ is the number of such distinct paths. Equivalently, it is the number of common neighbors that $R$ and $S$
share in $\Sigma_{r+1}$.  Since $R\in\Sigma_r$, it has $N-r$ neighbors in $\Sigma_{r+1}$, one for each bit index
not in $R$.   
Denote by $\beta\in R$ a \emph{bit index} element
of $\{1,\dots, N\}$ and let $(R\cup\{\gamma\}\setminus\{\beta\})$
denote the element of $\Sigma_r$ obtained by replacing $\beta$ by $\gamma$.
If $R=\{\beta_1,\dots, \beta_r\}$ then
\begin{equation}\label{eq:aminusaplus}(A_-A_+W)(R)=(N-r)W +\sum_{i=1}^r \sum_{\beta\notin R} 
W(R\cup\{\beta\}\setminus\{\beta_i\}) 
\end{equation}
where the first term counts those paths of the form $R\mapsto R\cup \{\gamma\}\mapsto R$ where $\gamma$ is one of the 
$N-r$ elements of $\{1,\dots,N\}\setminus R$, and the second term counts those paths through $\Sigma_{r+1}$ that originate
from vertices in $\Sigma_r$ of the form $R\cup\{\beta\}\setminus\{\beta_i\}$ 
where $\beta\notin R$, and terminate at $R$.
The sum on the right in (\ref{eq:aminusaplus}) can also be written
\[\sum_{i=1}^r [W(R)+ \sum_{\beta\notin R} W(R\cup\{\beta\}\setminus\{\beta_i\})]
-rW(R)
=-rW(R)
\]
where, in the last equation we used the fact that $W(R)+ \sum_{\beta\notin R} W(R\cup\{\beta\}\setminus\{\beta_i\})
=0$ because
it is  the inner product of $W$ with $A_+ \delta_{R\setminus\{ \beta_i\}}$. Here, $\delta_S$ is the vertex function taking value one at $S$ and 
zero elsewhere, so   $A_+ \delta_{R\setminus\{ \beta_i\}}$ takes value one at each $\Sigma_r$-neighbor of $R\setminus \{\beta_i\}$, that is, at 
$R$ and each $R\cup\{\beta\}\setminus\{\beta_i\}$, 
$\beta\notin R$, and zero elsewhere.   
 We conclude that 
\[(A_-A_+W)(R)=(N-2r)W(R) \]
for each $R\in\Sigma_r$ as claimed, when $W\in\mathcal{W}_r$.  This proves the case $k=0$.

\bigskip
Next we consider the case $k=1$.
Again let $W\in\mathcal{W}_r$.  The vector $A_-A_+^2$ is supported on $\Sigma_{r+1}$. We will use $S\in\Sigma_{r+1}$
to denote a generic target at which $A_-A_+^2W$ will be evaluated.
The values $(A_-A_+^2W)(S)$ are the values of $W$ that originate at some $R\in\Sigma_r$ and terminate
at $S$ through a path of length three that passes through $\Sigma_{r+2}$.
The values can be organized as 
\begin{multline}\label{mult:aminusaplus2}
(A_-A_+^2W)(S) = \sum_{b\notin S} (A_+^2W)(S\cup\{b\})
=2\sum_{b\notin S} \sum_{\{b_1,b_2\}\subset S\cup\{b\}} W((S\cup\{b\})\setminus \{b_1,b_2\})    \\
=2\sum_{b\notin S} \sum_{\beta\in S }W((S\setminus\{ \beta\})    
+2\sum_{b\notin S} \sum_{\{b_1,b_2\}\subset S\cup\{b\}, b_i\neq b} W((S\cup\{b\})\setminus \{b_1,b_2\})  
=I+II\end{multline}
where the sum in $I$ accounts for the cases in which one of the $b_i$ is equal to the bit index $b$ in the outer sum, 
and in $II$, neither of the $b_i$ is equal to $b$.
 The factor two accounts for the fact that each endpoint of $A_+^2$ is attained through two paths.  In general, a
 vertex value $W(R)$, $R\in\Sigma_r$ contributes to $(A_+^k W)(S)$, $S\in\Sigma_{r+k}$ with multiplicity $k!$ which 
 is the number of paths over which the value at $R$ is transmitted to $S$, one for each permutation of the $k$ elements
 in $S\setminus R$.

 Each inner sum in term I gives the value $(A_+W)(S)$ and there are $N-r-1$ values, one for each $b\notin S$.
   Multiplying by 2 gives $2(N-r-1) (A_+W)(S)$ for the term $I$.
By adding and subtracting terms of the form  $W(S\setminus\{\beta\})$, the sum in the term $II$ is 
\begin{multline*}
\sum_{b\notin S} \sum_{\{b_1,b_2\}\subset S\cup\{b\}, b_i\neq b} W((S\cup\{b\})\setminus \{b_1,b_2\})  
=  \sum_{\{b_1,b_2\}\subset S} \sum_{b\notin S} W((S\cup\{b\})\setminus \{b_1,b_2\})  \\
= \sum_{\{b_1,b_2\}\subset S}\Bigl(\Bigl[ W(S\setminus\{b_1\})
+W(S\setminus\{b_2\})+ \sum_{b\notin S} W(S\cup\{b\}\setminus \{b_1,b_2\})\Bigr]\\
 -W(S\setminus\{b_1\})-W(S\setminus\{b_2\})\Bigr)
= -\sum_{\{b_1,b_2\}\subset S} W(S\setminus\{b_1\})+W(S\setminus\{b_2\})
\end{multline*}
where we have used the fact that 
\[W(S\setminus\{b_1\})+W(S\setminus\{b_2\})+ \sum_{b\notin S} W((S\cup\{b\})\setminus \{b_1,b_2\})=0
\]
since it is equal to the inner product of $W$ with $A_+\delta_{S\setminus\{b_1,b_2\}}$ and $S\setminus\{b_1,b_2\}$ is in $\Sigma_{r-1}$.

In the last sum, each term $W(S\setminus\{ b_i\})$ occurs once for each other element of $S$, that is, it occurs 
$r$ times.  Since the vertex values that contribute to $S$ under $A_+$ are precisely those from vertices in $\Sigma_r$ obtained by 
deleting a single element of $S$, one has 
\[\sum_{b\notin S} \sum_{\{b_1,b_2\}\subset S\cup\{b\}, b_i\neq b} W((S\cup\{b\})\setminus \{b_1,b_2\})  
=- r (A_+W)(S)\, .
\]
Counting this twice gives the sum II in (\ref{mult:aminusaplus2}) and adding it to the sum I  gives a  total
\begin{multline*}(A_-A_+^2W)(S)= 2(N-r-1) (A_+W)(S)-2r(A_+W)(S)\\=[(N-2r)+(N-2(r+1))] (A_+W)(S)
\end{multline*}
as claimed. This proves the case $k=1$ of Thm.~\ref{thm:wradjacency}.

The proof of Thm.~\ref{thm:wradjacency} can be completed now 
by induction on $k$.  Suppose that $A_-A_+^k=m(r,k-1) A_+^{k-1}$
has been established on $\mathcal{W}_r$. We have, for $W\in \mathcal{W}_r$
\begin{multline*}
A_-A_+^{k+1}W =(A_-A_+ A_+^{k} -A_+A_- A_+^{k}+A_+A_- A_+^{k})W\\
=CA_+^{k}W+A_+A_- A_+^{k})W
=(N-2(k+r))A_+^k W+m(r,k-1)A_+^kW\\=m(r,k) A_+^k W
\end{multline*}
where we used Lem.~\ref{lem:commutator}, the induction hypothesis, and the fact that $m(r,k-1)+(N-2(r+k))=m(r,k)$.
This completes the proof of Thm.~\ref{thm:wradjacency}.\end{proof}

\begin{proof}[Proof of Lem.~\ref{lem:commutator}]
The argument is similar to that used in the case $k=0$ of the proof of 
Thm.~\ref{thm:wradjacency}.
 For $R\in \Sigma_r$, the value $(A_-A_+ V)(R)$ is the sum
of the values $V(P)$, $P\in\Sigma_r$ such that there is a two-edge path from $P$ to $R$ that passes
through a vertex in $\Sigma_{r+1}$.  This sum can be expressed as 
\[(A_-A_+ V)(R)=(N-r) \, V(R)+\sum_{\beta\notin R,\, b\in R} V(R\cup\{\beta\}\setminus \{b\})\]
where the first term counts the number of elements not in $R$, corresponding to paths from $R$ to a
vertex in $\Sigma_{r+1}$ and back to $R$, and the second counts all other paths, which necessarily come
from one-bit substitutions of bits in $R$ by bits not in $R$.

On the other hand,  the value $(A_+A_- V)(R)$ is the sum
of the values $V(P)$, $P\in\Sigma_r$ such that there is a two-edge path from $P$ to $R$ that passes
through a vertex in $\Sigma_{r-1}$. This sum can be expressed as
\[(A_+A_- V)(R)=r \, V(R)+\sum_{\beta\notin R,\, b\in R} V((R\setminus \{b\})\cup\{\beta\})\]
where the first term counts the number of paths from $R$ to $\Sigma_{r-1}$ and back to $R$ (which is the number of
ways to delete an element of $R$) and the second counts the number of paths that first delete an element of
$R$ then add an element not in $R$.

Subtracting the two terms gives
\[(CV)(R)=(A_-A_+ V)(R)-(A_+A_- V)(R)=(N-2r)V(R)\,.
\]
This proves the lemma.
\end{proof}

As a consequence of Theorem \ref{thm:wradjacency}, the action of any sequence of powers of $A_+$ and $A_-$ can be computed.  
For example, if $W\in \mathcal{W}_r$ then
\[A_-^2 A_+^{k+1}W=m(r,k) A_- A_+^{k} W=m(r,k) m(r,k-1) A_+^{k-1} W \, .\]
In general, if $B=A_-^{\tau_p} A_+^{\sigma_{p}}\cdots A_{-}^{\tau_1} A_+^{\sigma_1}$ then for
$W\in\mathcal{W}_r$, $BW$ is defined on $\Sigma_q$ where $q=r+(\sigma_1+\cdots+\sigma_p)-(\tau_1+\cdots+\tau_p)$
and is a multiple of $A_+^{q-r}W$ if $q\geq r$. 
However, if in any of the partial compositions of powers of $A_+$ and $A_-$ defining $B$, the number of applications of $A_+$ exceed those of $A_-$ 
by more than $N-r$ or the number of applications of $A_-$ exceeds those of $A_+$, then $BW=0$.

Higher order commutators of $A_-$ and $A_+$ simplify as follows.
Again, it suffices to consider a vector $V$ supported in $\Sigma_r$ for some $r$. In this case, $CV=(N-2r)V$
and $A_-CV=(N-2r)A_-V$ whereas $A_-V$ is supported in $\Sigma_{r-1}$ so $CA_-V=(N-2(r-1)) A_-V$ by 
Lem.~\ref{lem:commutator}.  Consequently,
\begin{multline*}[A_-,C]=(A_-C-CA_-)V=(N-2r)A_-V -(N-2(r-1)) A_-V
=-2A_-V\, \, {\rm and} \\
[A_+,C]=(A_+C-CA_+)V=(N-2r)A_+V -(N-2(r+1)) A_+V
=2A_+V\, .
\end{multline*}
It follows from Lem.~\ref{lem:commutator}  
that $[A_+,[A_-,C]]=-2[A_+,A_-]=2C$ and $[A_-,[A_+,C]]=2[A_-,A_+]=2C$ as well.
In particular, any higher order commutators involving $A_-$ and $A_+$ reduce to mutliples of $A_-$, $A_+$ and $C$
themselves, or vanish as in the case $[A_-,[A_-,C]]=-2[A_-,A_-]=0$.

\subsection{Projection onto $\mathcal{W}_r$}

Theorem \ref{thm:wradjacency} can be used to compute the projection of an arbitrary element of $\ell^2(\Sigma_r)$, that is,
a vertex function $V_r$ on $\mathcal{B}_N$ supported in $\Sigma_r$, onto the space $\mathcal{W}_r$ of vectors orthogonal
to $A_+(\ell^2(\Sigma_{r-1}))$.  This is done iteratively: first one computes the orthogonal complement of vectors of the 
form $A_+^rV_0$ where $V_0$ is supported in $\Sigma_0$ (this is the same as subtracting the average of $V_r$ since
$A_+^rV_0$ is constant on $\Sigma_r$). Call this orthogonal projection $V_r^{(0)}$. Next one projects $V_r^{(0)}$ onto the 
orthogonal complement of $A_+^{r-1}(\mathcal{W}_1)$ and continues until one has subtracted orthogonal projections
onto $\mathcal{W}_\ell$ for each $\ell<r$.  The result is a decomposition
\[V_r=\sum_{k=0}^r A_+^{r-k} W_k,\quad W_k\in\mathcal{W}_k\, .
\]
Working backwards, one finds that 
\[W_0= A_-^{r} V_r/(m(0,r-1)m(0,r-2)\cdots m(0,0))
\]
where $m(r,k)$ is as in (\ref{eq:mrk}).  
Similarly,
\[W_1=A_-^{r-1} (V_r-A_+^rW_0)/(m(1,r-2)m(1,r-3)\cdots m(1,0))
\]
etc.   The projections must be computed iteratively starting with the innermost projections in order to ensure
that components extracted at each step are orthogonal to components from smaller spheres.

\smallskip\noindent{\bf Observation.} The columns of the  matrix  $P_{\mathcal{W}_r}$ of the projection onto $\mathcal{W}_r$ in the standard basis appear to form a Parseval frame for $
\mathcal{W}_r$. The norm of each column is $1-\binom{N}{r-1}/\binom{N}{r}$ and the inner product of  columns $R$ and $S$ (equal to $P_{\mathcal{W}_r}(R,S)$ since $P_{\mathcal{W}_r}^2=P_{\mathcal{W}_r}$) depends only on $|R\cap S|$.

\begin{figure}[ht]
\centering 
\includegraphics[width=3.5in,height=2.2in]{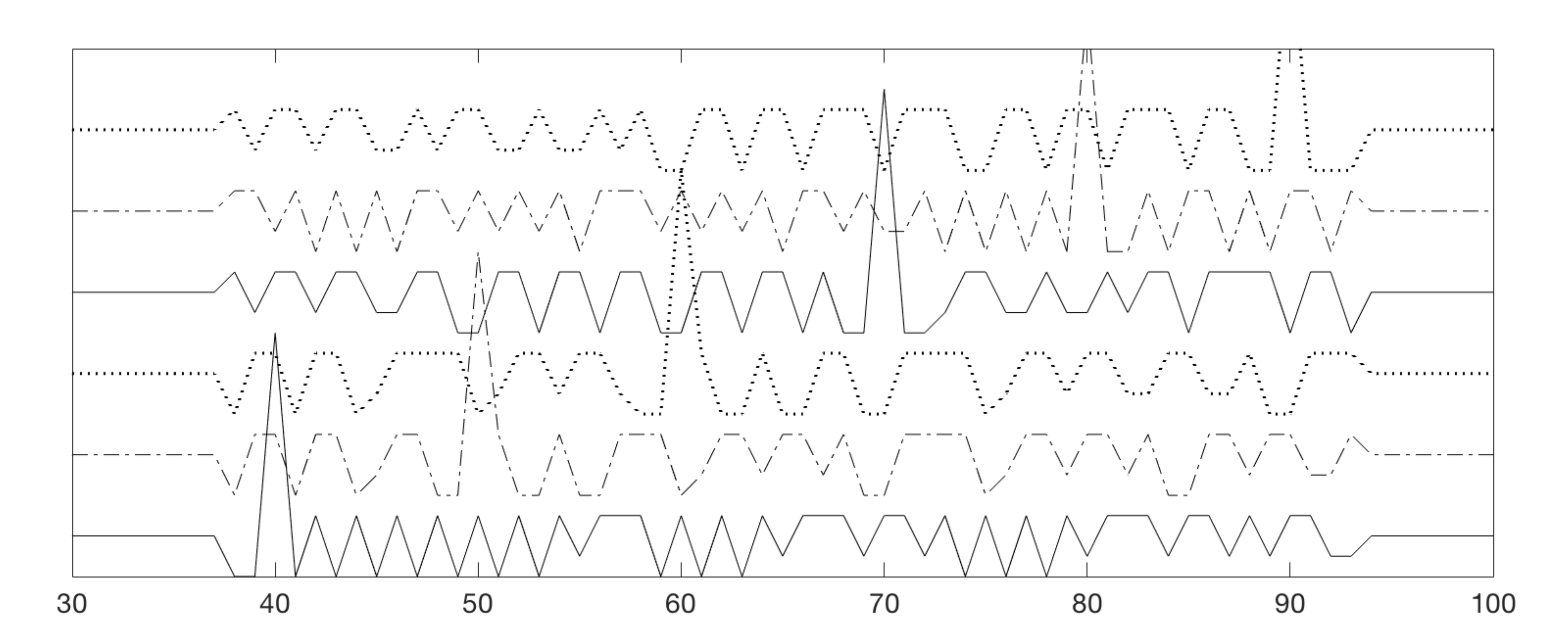}
\caption{Sample columns of the matrix of the projection onto $\mathcal{W}_r$, $N=8$, $r=3$.}
\label{fig:wrproj_vectors_100618}
\end{figure}

\section{Eigenspaces of Boolean difference operators}

Define $\mathcal{V}_r$ to consist of those vectors $V$ such that 
\[V=\sum_{k=0}^{N-r} c_k A_+^k W;\quad W\in \mathcal{W}_r\, .
\]
That is, $V$ is a sum of multiples of nonnegative powers of $A_+$ applied to a fixed vector $W\in \mathcal{W}_r$ and so
$V$ is a fixed multiple of $A_+^{\rho-r} W$ on $\Sigma_\rho$, $\rho\geq r$.
Theorem \ref{thm:hbdo_matrix} states that on $\mathcal{V}_r$,  conjugation of the Boolean difference operator BDO in (\ref{eq:bdo}) by the normalized Hadamard matrix $\bar{H}$, is
equivalent to multiplying the spherical coefficients $[c_0,\dots, c_{N-r}]$ of $V\in\mathcal{V}_r$ by a certain
tri-diagonal matrix $M$ of size $N-r+1$.

Define {\rm HBDO} to be the conjugation of BDO by $\bar{H}$, that is ${\rm HBDO}=2^{-N}H{\rm BDO} H$. Since $D=\bar{H}T\bar{H}$, 
HBDO can be written as $T (\alpha-L)T+\beta L$ where, as before, $T$ is the diagonal matrix with entries %
$T_{RR}=\sqrt{2r}$ when $|R|=r$.
Consider now the action of {\rm HBDO} on a vector $V=\sum_{k=0}^N V_k$ in $\mathcal{V}_r$ where $V_k$ is the restriction of $V$ to $
\Sigma_k$.  Since $L=NI-A$, it maps $V_k$ into a multiple by $N$ on $\Sigma_k$ and parts $A_\pm V_k$ supported in $\Sigma_{k\pm 1}$. One has
\[({\rm HBDO}V_k)_k= (T (\alpha-N I)T+\beta N I) V_k =[2k(\alpha-N)+\beta N] V_k\, .
\] 
The contribution of ${\rm HBDO}V_k$ to other spheres comes from the part
\[T AT-\beta A=(T A_+T-\beta A_+)+T A_-T-\beta A_-\]
which expresses the parts that map $V_k$ into $\Sigma_{k+1}$ and $\Sigma_{k-1}$ respectively.
One has
\begin{eqnarray*}(T A_+T-\beta A_+) V_k&=&(2\sqrt{k(k+1)}-\beta)\, A_+ V_k\quad{\rm and}\quad \\
 (T A_-T-\beta A_-) V_k&=&(2\sqrt{k(k-1)}-\beta)\, A_- V_k\, .
 \end{eqnarray*}
Since $V_k=c_{k}\, A_+^{k-r}V_r$, $V_r\in\mathcal{W}_r$,  
\begin{multline*}(T A_+T-\beta A_+) V_k=c_k ((2\sqrt{k(k+1)}-\beta)\, A_+ ) A_+^{k-r}V_r\\
=c_k (2\sqrt{k(k+1)}-\beta)\, A_+^{k+1-r}V_r =(c_k/c_{k+1}) (2\sqrt{k(k+1)}-\beta) V_{k+1}\end{multline*}
if $k+1\leq N$. If $k=N$ then this term vanishes.
Similarly,
\begin{multline*} (T A_-T-\beta A_-) V_k=c_k (2\sqrt{k(k-1)}-\beta) A_-  A_+^{k-r}V_r \\
= c_k (2\sqrt{k(k-1)}-\beta) m(r,k-r-1) A_+^{k-1-r}V_r \\
= (c_k/c_{k-1}) (2\sqrt{k(k-1)}-\beta) m(r,k-r-1) V_{k-1}\end{multline*}
where, as before, $m(r,k-r-1)$ is defined by (\ref{eq:mrk}) 
when $k-1\geq r$. If $k=r$ then this term vanishes. 

Putting these pieces together, one considers the coefficient of $V_k$ in the image under {\rm HBDO}.  
The calculations above indicate that HBDO is represented on $\mathcal{V}_r$ by  a matrix $M_r(k,\ell)$ of size $N-r+1$ 
acting on the vectors of coefficients $[c_0,\dots, c_{N-r}]^T$ of powers $A_+^k$ of $W\in \mathcal{W}_r$.
One then considers the mapping as a \emph{discrete} mapping on the coefficients of the vectors
$A_+^{k-r}V_r$. 
It follows from the calculations above that $M_r(k,\ell)$ can be expressed as the lower-right minor ($M_r(k,\ell)=M(k+r,\ell+r)$) of the size $N+1$ tri-diagonal matrix $M$ with entries:
\begin{equation}\label{eq:mhbdo} 
M(k, \ell)=\begin{cases} (2\sqrt{\ell(\ell-1)}-\beta) m(r,\ell-1-r);\,\, k=\ell-1\geq r\\
2\ell (\alpha-N)+\beta N; k=\ell\geq r\\
2\sqrt{\ell (\ell+1)}-\beta; k=\ell+1;  r\leq \ell<N\\
0,\,\,{\rm else}\, .
\end{cases}
\end{equation}
We have proved the following.

\begin{theorem}\label{thm:hbdo_matrix} If $V\in\mathcal{V}_r$, $ V=\sum_{k=0}^{N-r} c_k A_+^k W$, then 
${\rm HBDO} V=\sum_{k=0}^{N-r} d_k A_+^k W$ where $\mathbf{d}=M_r\mathbf{c}$ where $\mathbf{c}=[c_0,\dots, c_{N-r}]^T$.
\end{theorem}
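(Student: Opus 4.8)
The ingredients are already assembled in the displays preceding the statement, so the plan is to organize the sphere-by-sphere computation and isolate the one place where real structure enters. First I would put ${\rm HBDO}=T(\alpha-L)T+\beta L$ into a form adapted to the Hamming spheres: substituting $L=NI-A$ and $A=A_++A_-$ gives
\[
{\rm HBDO}=(\alpha-N)T^2+\beta N I+(TA_+T-\beta A_+)+(TA_-T-\beta A_-).
\]
On $\ell^2(\Sigma_k)$ the diagonal matrix $T^2$ acts as $2kI$ and $T$ as $\sqrt{2k}\,I$, so these four summands act, respectively, as multiplication by $2k(\alpha-N)+\beta N$, as the map $(2\sqrt{k(k+1)}-\beta)A_+$ into $\Sigma_{k+1}$, and as the map $(2\sqrt{k(k-1)}-\beta)A_-$ into $\Sigma_{k-1}$.

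Next I would feed in $V=\sum_{k=0}^{N-r}c_kA_+^kW$ and write $V_j$ for its restriction to $\Sigma_j$, so $V_j=c_{j-r}A_+^{j-r}W$ for $r\le j\le N$ and $V_j=0$ otherwise. Applying the three operator pieces above to $V_j$: the diagonal piece returns $(2j(\alpha-N)+\beta N)V_j$, still a multiple of $A_+^{j-r}W$; the $A_+$ piece returns $(2\sqrt{j(j+1)}-\beta)c_{j-r}A_+^{j+1-r}W$ (and vanishes for $j=N$); and the $A_-$ piece returns $(2\sqrt{j(j-1)}-\beta)c_{j-r}A_-A_+^{j-r}W$. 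The one substantive step is the last: I invoke Theorem~\ref{thm:wradjacency}, applied to the power $A_+^{j-r}W$, to replace $A_-A_+^{j-r}W$ by $m(r,j-r-1)A_+^{j-1-r}W$ — again a scalar multiple of a single power of $A_+$ applied to $W$. This is exactly what keeps ${\rm HBDO}V$ inside $\mathcal{V}_r$, and it is the reason the coefficient matrix is tridiagonal rather than merely banded: without the collapse guaranteed by Theorem~\ref{thm:wradjacency} the downward images of the $V_j$ would spread over several powers of $A_+$. When $j=r$ this term must vanish, and it does: $W\in\mathcal{W}_r$ means $W\perp A_+\ell^2(\Sigma_{r-1})$, hence $A_-W=A_+^TW=0$, consistent with reading $m(r,-1)$ as the empty sum.

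Finally I would collect coefficients. The coefficient $d_k$ attached to $A_+^kW$ (equivalently, to the $\Sigma_{k+r}$-component of ${\rm HBDO}V$) picks up exactly three contributions — the diagonal action on $V_{k+r}$, the $A_+$ action on $V_{k+r-1}$, and the rewritten $A_-$ action on $V_{k+r+1}$ — namely
\[
d_k=(2(k+r)(\alpha-N)+\beta N)\,c_k+(2\sqrt{(k+r-1)(k+r)}-\beta)\,c_{k-1}+(2\sqrt{(k+r+1)(k+r)}-\beta)\,m(r,k)\,c_{k+1},
\]
with $c_{-1}$ and $c_{N-r+1}$ read as absent. Matching this term by term against the definition (\ref{eq:mhbdo}) of $M$ gives $d_k=\sum_\ell M(k+r,\ell+r)c_\ell=(M_r\mathbf{c})_k$, which is the claim. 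I do not expect any genuine obstacle beyond careful index bookkeeping and the two boundary rows ($k=0$, where the subdiagonal term is absent and the superdiagonal weight is $m(r,0)$; and $k=N-r$, where the superdiagonal term is absent); all the mathematical content sits in Theorem~\ref{thm:wradjacency} and the commutator Lemma~\ref{lem:commutator} behind it.
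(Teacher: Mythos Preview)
Your proposal is correct and follows essentially the same route as the paper: decompose ${\rm HBDO}=(\alpha-N)T^2+\beta N I+(TA_+T-\beta A_+)+(TA_-T-\beta A_-)$, evaluate each piece on the $\Sigma_j$-component $V_j=c_{j-r}A_+^{j-r}W$, and invoke Theorem~\ref{thm:wradjacency} to rewrite $A_-A_+^{j-r}W$ as $m(r,j-r-1)A_+^{j-1-r}W$, then read off the tridiagonal coefficient recursion and match it to~(\ref{eq:mhbdo}). Your handling of the boundary cases is slightly more explicit than the paper's (you justify $A_-W=0$ via $A_-=A_+^T$ and $W\perp A_+\ell^2(\Sigma_{r-1})$, whereas the paper simply asserts the vanishing), but the argument is otherwise identical.
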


If $\mathbf{c}=[c_0,\dots, c_{N-r}]^T$ is an eigenvector of $M_r$ then $\sum c_k A_+^{k-r} V_r$ ($V_r\in\mathcal{W}_r$)
is an eigenvector of {\rm HBDO} having the same eigenvalue. 

\begin{corollary} Any eigenvector of ${\rm HBDO}$ has the form
$V=\sum_{k=0}^{N-r} c_k A_+^k W$ where $\mathbf{c}$ is an eigenvector of the matrix $M_r$.   
  Since $\mathcal{W}_r$ has dimension $\binom{N}{r}-\binom{N}{r-1}$, for $\alpha,\beta$ such that $M_r$ is nondegenerate, 
  {\rm HBDO} has $N-r+1$ eigenspaces of dimension $\binom{N}{r}-\binom{N}{r-1}$.
\end{corollary}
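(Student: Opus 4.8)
The plan is to realize HBDO as a block operator relative to a direct‑sum decomposition $\ell^2(\mathcal{B}_N)=\bigoplus_r\mathcal{V}_r$ and to read its eigenstructure off Theorem~\ref{thm:hbdo_matrix} one block at a time. One inclusion is already available: if $\mathbf{c}$ is an eigenvector of $M_r$ with eigenvalue $\mu$, then for $V=\sum_kc_kA_+^kW$ with $W\in\mathcal{W}_r$ Theorem~\ref{thm:hbdo_matrix} gives ${\rm HBDO}\,V=\sum_k(M_r\mathbf{c})_kA_+^kW=\mu V$, so $V$ is an eigenvector of HBDO whenever $V\neq0$. What remains is the converse, together with the count of eigenspaces.

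First I would record two facts about the powers $A_+^k$ on $\mathcal{W}_r$, both immediate from Theorem~\ref{thm:wradjacency} together with $A_-=A_+^{T}$: iterating $\|A_+^{j+1}W\|^2=\langle A_+^jW,A_-A_+^{j+1}W\rangle=m(r,j)\|A_+^jW\|^2$ gives $\|A_+^kW\|^2=m(r,k-1)\cdots m(r,0)\,\|W\|^2$, and the multiplier in (\ref{eq:mrk}) simplifies to $m(r,j)=(j+1)(N-2r-j)$, which is positive for $0\le j<N-2r$ and vanishes at $j=N-2r$. Hence $A_+^k$ is injective on $\mathcal{W}_r$ for $0\le k\le N-2r$ while $A_+^{N-2r+1}$ annihilates $\mathcal{W}_r$. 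It follows that $\mathcal{V}_r=\bigoplus_{k=0}^{N-2r}A_+^k\mathcal{W}_r$ (the $k$th summand supported on $\Sigma_{r+k}$), that the bilinear pairing $(\mathbf{c},W)\mapsto\sum_{k=0}^{N-2r}c_kA_+^kW$ identifies $\mathcal{V}_r$ with $\mathbb{R}^{N-2r+1}\otimes\mathcal{W}_r$, and, by Theorem~\ref{thm:hbdo_matrix}, that HBDO maps $\mathcal{V}_r$ into itself and acts there as $\bar{M}_r\otimes I_{\mathcal{W}_r}$, where $\bar{M}_r$ is the leading $(N-2r+1)$‑square block of $M_r$; this block is invariant because the super‑diagonal entry of $M_r$ carrying the factor $m(r,N-2r)$ vanishes, decoupling it from the trailing coordinates.

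For the converse I would assemble the global decomposition: applying the iterated orthogonal decomposition $\ell^2(\Sigma_\rho)=\bigoplus_{s=0}^{\rho}A_+^{\rho-s}\mathcal{W}_s$ recalled above at every sphere and regrouping the summands by their source level $s$ yields $\ell^2(\mathcal{B}_N)=\bigoplus_\rho\ell^2(\Sigma_\rho)=\bigoplus_r\mathcal{V}_r$, a direct sum. If $v$ is an eigenvector of HBDO with eigenvalue $\mu$, write $v=\sum_rv_r$ with $v_r\in\mathcal{V}_r$; HBDO‑invariance of each $\mathcal{V}_r$ and uniqueness of the direct‑sum representation force ${\rm HBDO}\,v_r=\mu v_r$, so each nonzero $v_r$ is a $\mu$‑eigenvector of $\bar{M}_r\otimes I_{\mathcal{W}_r}$. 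When $M_r$ is nondegenerate, $\bar{M}_r$ has simple spectrum, its $\mu$‑eigenspace is a line $\mathbb{R}\mathbf{u}$, the $\mu$‑eigenspace of $\bar{M}_r\otimes I_{\mathcal{W}_r}$ is $\mathbf{u}\otimes\mathcal{W}_r$, and every member of it has the form $\sum_kc_kA_+^kW$ with $\mathbf{c}$ an eigenvector of $M_r$ whose leading $N-2r+1$ entries are $\mathbf{u}$ (the remaining entries, multiplying vectors $A_+^kW=0$, are immaterial). This is the asserted form, and it shows that each level $r$ contributes exactly $N-2r+1$ HBDO‑eigenspaces, each of dimension $\dim\mathcal{W}_r=\binom{N}{r}-\binom{N}{r-1}$; since $\sum_r(N-2r+1)\bigl(\binom{N}{r}-\binom{N}{r-1}\bigr)=\sum_\rho\binom{N}{\rho}=2^N$ (telescoping sphere by sphere), these account for all of HBDO. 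For generic $(\alpha,\beta)$ the blocks $\bar{M}_r$ are nondegenerate with pairwise disjoint spectra — each $\bar{M}_r$ is unreduced tridiagonal, diagonally similar to a real symmetric matrix, hence of simple spectrum off a finite set of $\beta$, while coincidence of an eigenvalue for two values of $r$ is a proper algebraic condition on $(\alpha,\beta)$ — so in that case every eigenvalue of HBDO arises from a single $r$ and the one‑index description is exact.

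I expect the main obstacle to be the structural bookkeeping of the middle two paragraphs: extracting the direct sum $\bigoplus_r\mathcal{V}_r$ cleanly from the per‑sphere multiscale decomposition, and, within it, pinning the active range of powers to $0\le k\le N-2r$ through $A_+^{N-2r+1}\mathcal{W}_r=\{0\}$ — this is what makes the coordinate identification $\mathcal{V}_r\cong\mathbb{R}^{N-2r+1}\otimes\mathcal{W}_r$ and the block $\bar{M}_r$ well defined, and it is why only $N-2r+1$ of the $N-r+1$ eigenvalues of $M_r$ produce (nonzero) eigenvectors of HBDO at level $r$. Everything past that point is linear algebra built on Theorems~\ref{thm:wradjacency} and~\ref{thm:hbdo_matrix}.
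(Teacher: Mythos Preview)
The paper offers essentially no proof of this corollary beyond the one sentence preceding it, which records only the easy direction (an eigenvector of $M_r$ produces an eigenvector of ${\rm HBDO}$). Your argument is considerably more complete: you supply the converse by assembling the global direct sum $\ell^2(\mathcal{B}_N)=\bigoplus_r\mathcal{V}_r$ from the per-sphere multiscale decomposition, observe that ${\rm HBDO}$ preserves each $\mathcal{V}_r$ and acts there as a tensor $\bar M_r\otimes I_{\mathcal{W}_r}$, and then read the eigenstructure off block by block. That is correct and is the natural way to justify the ``any eigenvector'' clause, which the paper simply asserts.

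You also catch something the paper elides. From $m(r,k)=(k+1)(N-2r-k)$ and $\|A_+^{k+1}W\|^2=m(r,k)\|A_+^kW\|^2$ one gets $A_+^{N-2r+1}\mathcal{W}_r=\{0\}$, so the genuine coordinate space for $\mathcal{V}_r$ is $\mathbb{R}^{N-2r+1}$, and only the leading $(N-2r+1)$-block $\bar M_r$ of $M_r$ governs ${\rm HBDO}$ there (the vanishing super-diagonal factor $m(r,N-2r)=0$ decouples it, as you note). This means the level-$r$ contribution is $N-2r+1$ eigenspaces of dimension $\binom{N}{r}-\binom{N}{r-1}$, not $N-r+1$ as the corollary states; indeed only your count telescopes to $\sum_\rho\binom{N}{\rho}=2^N$. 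So your proof not only fills the gap left by the paper but quietly corrects the stated multiplicity. Your closing caveat---that a single eigenvector can mix levels $r$ when eigenvalues of different $\bar M_r$ coincide, which is excluded only for generic $(\alpha,\beta)$---is also a point the paper's phrasing suppresses.
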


The theorem and corollary apply to HBDO without regard to the parameters $\alpha,\beta$ which can be connected
to bandlimiting properties.  In \cite{Hogan20172} it was shown that when $\alpha=\beta=2\sqrt{K(K-1)}$,
the bandlimiting operator $P_K$ commutes with BDO.  Since conjugation of $P_K$ by $\bar{H}$ is 
$Q_K$, this means that HBDO commutes with $Q_K$ when $\alpha=\beta=2\sqrt{K(K-1)}$ in HBDO.
In this case, one can define the reduced space $\mathcal{V}_{r,K}$ of vectors of the form $\sum_{k=0}^{K-r} c_k A_+^{k}W$, 
$W\in\mathcal{W}_r$ and the reduced matrix $M_{r,K}$, the restriction of $M_r$ to its principal minor of size $K+1-r$. 
One has the following.

\begin{proposition} For each $N$ and $r\leq K<N$ with $\alpha=\beta=\sqrt{K(K-1)}$, the
tridiagonal matrix $M_{r,K}$  of size $(K+1-r)$ 
has eigenvalues equal to those of the conjugation of HBDO by $Q_K$ applied to  
the space $\mathcal{V}_{r,K}$.
\end{proposition}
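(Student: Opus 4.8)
The plan is to realise $\mathcal{V}_{r,K}$ as an invariant subspace of the operator $Q_K\,\mathrm{HBDO}\,Q_K$ and to check that, written in the spanning set $\{A_+^kW : 0\le k\le K-r\}$ of $\mathcal{V}_{r,K}$ (with $W\in\mathcal{W}_r$), this restricted operator is represented precisely by the tridiagonal matrix $M_{r,K}$. Once this is done the asserted spectral identity is immediate, since the eigenvalues of a linear map on a finite-dimensional space do not depend on the basis used to represent it. Thus the real content is the invariance of $\mathcal{V}_{r,K}$ and the bookkeeping that produces the principal minor $M_{r,K}$ rather than the full matrix $M_r$ of Theorem~\ref{thm:hbdo_matrix}.

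First I would note that each generator $A_+^kW$ with $0\le k\le K-r$ is supported on the Hamming sphere $\Sigma_{r+k}$, which lies in $B(0,K)$, so that $\mathcal{V}_{r,K}\subset\mathrm{range}(Q_K)=\ell^2(B(0,K))$ and $Q_K$ acts as the identity on $\mathcal{V}_{r,K}$; hence on this subspace $Q_K\,\mathrm{HBDO}\,Q_K$ coincides with $V\mapsto Q_K(\mathrm{HBDO}\,V)$. Next I would apply to $V=\sum_{k=0}^{K-r}c_kA_+^kW$ the sphere-by-sphere description of HBDO used to prove Theorem~\ref{thm:hbdo_matrix}, i.e.\ its splitting into a part that preserves each $\Sigma_\rho$, the part $TA_+T-\beta A_+$ that raises $\rho$ by one, and the part $TA_-T-\beta A_-$ that lowers it by one, the resulting coefficients being exactly the tridiagonal entries (\ref{eq:mhbdo}) computed via Theorem~\ref{thm:wradjacency}. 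The only contribution that could land below $\Sigma_r$ is $(2\sqrt{r(r-1)}-\beta)\,c_0\,A_-W$, and it vanishes because $A_-W=0$ for $W\in\mathcal{W}_r$---indeed $\langle A_-W,u\rangle=\langle W,A_+u\rangle=0$ for every $u\in\ell^2(\Sigma_{r-1})$, which is the defining orthogonality of $\mathcal{W}_r$; the only contribution that could land above $\Sigma_K$ is the single term $(2\sqrt{K(K+1)}-\beta)\,c_{K-r}\,A_+^{K-r+1}W$ on $\Sigma_{K+1}$, which $Q_K$ annihilates. Therefore $Q_K\,\mathrm{HBDO}\,Q_K\,V=\sum_{k=0}^{K-r}d_kA_+^kW\in\mathcal{V}_{r,K}$, and since $M_r$ is tridiagonal and $c_{K-r+1}=0$ the retained coefficients satisfy $\mathbf{d}=M_{r,K}\,\mathbf{c}$ with $\mathbf{c}=[c_0,\dots,c_{K-r}]^T$; that is, the restriction of $Q_K\,\mathrm{HBDO}\,Q_K$ to $\mathcal{V}_{r,K}$ is represented in this basis by the principal $(K+1-r)\times(K+1-r)$ minor $M_{r,K}$ of $M_r$. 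Basis-independence of the spectrum then completes the proof.

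The step needing the most care is the invariance check at the truncation sphere $\Sigma_K$: one must verify that $\mathrm{HBDO}$ sends $\mathcal{V}_{r,K}$ into $\mathcal{V}_{r,K}$ together with \emph{only} the one $A_+^{K-r+1}W$ component on $\Sigma_{K+1}$, so that applying $Q_K$ deletes exactly the bottom row and column separating $M_{r,K}$ from the matching block of $M_r$ and introduces nothing else; this is where the hypotheses $r\le K<N$ are used, so that $\Sigma_{K+1}$ is nonempty and the recursion of Theorem~\ref{thm:wradjacency} applies. Finally, the eigenvalue identification above is formal and does not use the particular values $\alpha=\beta=\sqrt{K(K-1)}$; those values enter only because, as developed in \cite{Hogan20172}, they are the ones that tie BDO---and hence, after conjugation by $\bar{H}$, HBDO---to $K$-bandlimiting, which is precisely why ``conjugation of HBDO by $Q_K$'' restricted to $\mathcal{V}_{r,K}$ is the operator of interest here.
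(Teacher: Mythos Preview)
Your proof is correct and follows essentially the same route the paper takes: the proposition is stated there without a separate proof, as an immediate consequence of Theorem~\ref{thm:hbdo_matrix} together with the observation that truncation by $Q_K$ simply removes the rows and columns of $M_r$ indexed by spheres $\Sigma_\rho$ with $\rho>K$. Your write-up supplies the bookkeeping the paper leaves implicit---in particular the verification that the potential leak below $\Sigma_r$ vanishes because $A_-W=0$ for $W\in\mathcal{W}_r$, and that the single leak above $\Sigma_K$ is killed by $Q_K$---and you correctly note that the formal eigenvalue identification does not actually require the specific choice $\alpha=\beta=\sqrt{K(K-1)}$; the paper invokes those values only because they make $Q_K$ commute with HBDO, which is the motivation for looking at this truncated operator in the first place.
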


\begin{tiny}
\begin{table}[htp]
\caption{Matrix $M=M^{\rm HBDO}_{8,3,2}$ of HBDO on $\mathcal{V}_r$ with $(N,K,r)=(8,3,2)$
}
\begin{center}
\begin{tabular}{| |c|c|c|c|c|c | c|| }
\hline 
 51.1384  & -8.1169 &   0& 0 &  0  &  0  &       0 \\
   -2.0292  &  48.9948 &   0 & 0&  0 &  0 &      0\\
 0  &  0   &46.8513     &12.0964   &0    &0       &  0\\
0   &0   & 2.0161  &44.7077   &16.1050   &0      &  0\\
0  & 0   &0  & 4.0262   &42.5641   &0        &0\\
0  &0   &0   &0    & 6.0333   &40.4205       &   -48.2306\\
         0        & 0        & 0        & 0      &   0         &8.0384         &38.2769 \\ \hline \hline
\end{tabular}
\end{center}
\label{tab:mhbdo832}
\end{table}%
\end{tiny}

\section{Eigenspaces of $PQP$}
As before we assume that $Q=Q_K$ is multiplication by the characteristic function of the closed ball of radius $K$ centered
at the origin. Then $\bar{H}Q\bar{H}=P$ and conjugation of $PQP$ by $\bar{H}$ leaves the operator $QPQ$.  We study the eigenspaces
of the latter.

\begin{lemma} \label{lem:vr_invariance} $A_+$ and $A_-$ map $\mathcal{V}_r$ to itself.
\end{lemma}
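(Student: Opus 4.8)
The plan is to verify the two claims by acting directly on a generic element $V=\sum_{k=0}^{N-r}c_k A_+^k W\in\mathcal{V}_r$ (with $W\in\mathcal{W}_r$ fixed) and re-expressing the image in the same form, thereby also reading off how $A_\pm$ act on the coefficient vector $[c_0,\dots,c_{N-r}]^T$.

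For $A_+$ this is pure index shifting: $A_+V=\sum_{k=0}^{N-r}c_k A_+^{k+1}W=\sum_{k=1}^{N-r+1}c_{k-1}A_+^kW$, and the top term $c_{N-r}A_+^{N-r+1}W$ vanishes because $A_+^{N-r+1}W$ is supported on the Hamming sphere $\Sigma_{N+1}=\emptyset$. Hence $A_+V=\sum_{k=1}^{N-r}c_{k-1}A_+^kW\in\mathcal{V}_r$; equivalently, at the coefficient level $A_+$ is the shift $[c_0,\dots,c_{N-r}]^T\mapsto[0,c_0,\dots,c_{N-r-1}]^T$.

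For $A_-$ I would expand $A_-V=\sum_{k=0}^{N-r}c_k A_-A_+^kW$ and treat the $k=0$ term and the $k\ge1$ terms separately. The terms with $k\ge1$ are handled by Theorem~\ref{thm:wradjacency}, applied with $k$ there replaced by $k-1$ (legitimate since $r+(k-1)\le N-1<N$ for all $k\le N-r$): $A_-A_+^kW=m(r,k-1)A_+^{k-1}W$. The one place where the structure of $\mathcal{W}_r$ is really used — and the step I would flag as the crux — is the $k=0$ term $A_-W$: since $\mathcal{W}_r$ is by definition orthogonal to $A_+\ell^2(\Sigma_{r-1})$ and $A_-=A_+^T$, for every $u\in\ell^2(\Sigma_{r-1})$ one has $\langle A_-W,u\rangle=\langle W,A_+u\rangle=0$, and because $A_-W$ is supported on $\Sigma_{r-1}$ this forces $A_-W=0$. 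Collecting the surviving terms gives $A_-V=\sum_{k=1}^{N-r}c_k\,m(r,k-1)A_+^{k-1}W=\sum_{j=0}^{N-r-1}c_{j+1}\,m(r,j)A_+^jW\in\mathcal{V}_r$, so $A_-$ acts on coefficients by $c_j\mapsto m(r,j)\,c_{j+1}$.

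Beyond that $k=0$ boundary computation everything is routine index bookkeeping; no case analysis on $N$, $r$ or $K$ is needed. As a consistency check one can note that the coefficient-level formulas for $A_+$ and $A_-$ obtained above, after incorporating the diagonal factors coming from $T$, reproduce exactly the off-diagonal entries of the tridiagonal matrix $M$ in Theorem~\ref{thm:hbdo_matrix}, which is worth recording since the same invariance of $\mathcal{V}_r$ underlies the analysis of $QPQ$ that follows.
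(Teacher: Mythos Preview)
Your argument is correct and follows essentially the same route as the paper's: for $A_+$ you shift indices and kill the top term since $A_+^{N-r+1}W$ lands on $\Sigma_{N+1}=\emptyset$, and for $A_-$ you invoke Theorem~\ref{thm:wradjacency} term by term. Your treatment is in fact a bit more careful than the paper's, which silently drops the $k=0$ term $A_-W$; you supply the reason ($A_-=A_+^T$ together with $W\perp A_+\ell^2(\Sigma_{r-1})$), which is exactly the point the paper leaves implicit.
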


For $W\in\mathcal{W}_r$, $A_+$ maps $\sum_{k=0}^{N-r} c_k A_+^{k} W$ to 
$\sum_{k=0}^{N-r-1} c_k A_+^{k+1} W = \sum_{k=1}^{N-r} c_{k-1} A_+^{k} W$ which is an element of $\mathcal{V}_r$ whose coefficient 
of $W=A_+^0 W$ is zero  while $A_-$ maps 
$\sum_{k=0}^{N-r} c_k A_+^{k} W$ to $\sum_{k=1}^{N-r} c_k m(r,k-1) A_+^{k-1} W = \sum_{k=0}^{N-r-1} c_{k+1} m(r,k) A_+^{k} W$
which is an element of $\mathcal{V}_r$ whose coefficient of $A_+^{N-r}W$ is zero. 
The lemma implies that $A=A_++A_-$ preserves $\mathcal{V}_r$ and so does any polynomial $p(A)$ in $A$.

\begin{proposition} \label{prop:P_poly}The spectrum-limiting operator $P=P_K$ can be expressed as a polynomial $p(A)$ of degree $N$.
\end{proposition}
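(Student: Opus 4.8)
The plan is to identify $P_K$ as a spectral projection of the adjacency matrix $A$ and then realize that projection by Lagrange interpolation on the spectrum of $A$. First I would record the spectral data of $A$. Since $L = NI - A$, Lemma~\ref{lem:hadamard_eigenvalue} gives $A H_S = (N - 2|S|)\, H_S$, so the Hadamard vectors form a complete eigenbasis for the real symmetric matrix $A$, and the distinct eigenvalues of $A$ are exactly $\lambda_j := N - 2j$ for $j = 0, 1, \dots, N$, with $\lambda_j$ having eigenspace $\mathrm{span}\{H_S : |S| = j\}$. These are $N+1$ distinct real numbers. By definition $P_K$ is the orthogonal projection onto ${\rm BPW}_K = \bigoplus_{j=0}^{K}\ker(A - \lambda_j I)$, i.e.\ $P_K$ is precisely the spectral projection of $A$ associated with the eigenvalues $\lambda_0,\dots,\lambda_K$.

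Next I would write down the interpolating polynomial explicitly. Let $p$ be the unique polynomial of degree at most $N$ with $p(\lambda_j) = 1$ for $0 \le j \le K$ and $p(\lambda_j) = 0$ for $K < j \le N$; in Lagrange form,
\[
p(x) = \sum_{j=0}^{K}\ \prod_{\substack{0 \le i \le N\\ i \ne j}} \frac{x - \lambda_i}{\lambda_j - \lambda_i}.
\]
Because $\{H_S\}$ is a basis and $p(A)H_S = p(\lambda_{|S|})H_S = \mathbbm{1}_{\{|S|\le K\}}H_S = P_K H_S$ for every $S$, one concludes $p(A) = P_K$; equivalently $P_K = \sum_{j=0}^{K}\prod_{i \ne j}(A - \lambda_i I)/(\lambda_j - \lambda_i)$. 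This is the substance of the proposition, and there is no real obstacle here: it is just the general fact that any function defined on the spectrum of a diagonalizable operator with $N+1$ distinct eigenvalues is given by a polynomial of degree at most $N$.

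The one point that deserves a short verification is that the degree is exactly $N$ and not smaller. I would compute the leading coefficient of $p$ as the top divided difference of the interpolation data. Using $\lambda_j - \lambda_i = 2(i-j)$ one finds $\prod_{i\ne j}(\lambda_j - \lambda_i) = 2^{N}(-1)^{j}\, j!\,(N-j)!$, so the leading coefficient equals
\[
\frac{1}{2^{N} N!}\sum_{j=0}^{K}(-1)^{j}\binom{N}{j} = \frac{(-1)^{K}}{2^{N} N!}\binom{N-1}{K},
\]
using the standard partial-sum identity $\sum_{j=0}^{K}(-1)^{j}\binom{N}{j} = (-1)^{K}\binom{N-1}{K}$. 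This is nonzero for $0 \le K < N$, so $\deg p = N$ in the range of interest (the degenerate case $K = N$, where $P_N = I$ is realized already by the constant polynomial $1$, is excluded). Once Proposition~\ref{prop:P_poly} is in hand, Lemma~\ref{lem:vr_invariance} immediately yields that $P_K$, being a polynomial in $A$, preserves each $\mathcal{V}_r$, which is what the subsequent analysis of $QPQ$ will use.
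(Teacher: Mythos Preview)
Your argument is correct and follows essentially the same route as the paper: identify $P_K$ with the spectral projection of $A$ onto the eigenvalues $N-2j$ for $j\le K$ via Lemma~\ref{lem:hadamard_eigenvalue}, then realize it by the degree-$N$ Lagrange interpolant. The one addition you make---computing the leading coefficient as $\frac{(-1)^K}{2^N N!}\binom{N-1}{K}$ to confirm the degree is exactly $N$ rather than merely at most $N$---is a detail the paper's own proof omits, so your version is in fact slightly more complete.
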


\begin{proof} By Lem.~\ref{lem:hadamard_eigenvalue}, the Hadamard vectors are eigenvectors of $L$ and hence of $A$, with $AH_R=(N-2|R|)H_R$.
 One can then express $P$ in terms of $A$
simply by forming the Lagrange interpolating polynomial that maps the eigenvalues of $A$ to those of $P$ (which are one or zero).  
Recall that the Lagrange interpolating polynomial that maps $x_k$ to $y_k$, $k=0,\dots, N$ is $\sum p_k$ where
$p_k(x)=y_k\prod_{j=0,\,j\neq k}^N\frac{x-x_j}{x_k-x_j}$.   We choose $p$ so that $p(A) H_R=H_R$ if $|R|\leq K$ and $p(A)H_R=0$ if 
$|R| >K$.  Since $AH_R=(N-2|R|) H_R$ this means one should have $p(N-2r)=1$ if $0\leq r\leq K$ and $p(N-2r)=0$ if $r>K$.
Therefore, set
\begin{equation}\label{eq: pk_lagrange} p_k=\prod_{j=0,j\neq k}^N \frac{x-(N-2j)}{2(j-k)};\qquad p(x)=\sum_{k=0}^K p_k
\end{equation}
Then $P=p(A)$ as verified on the Hadamard basis. 
This proves the proposition.
\end{proof}

As a consequence of the proposition, the space $\mathcal{V}_r$ is invariant under $P$.  The action of $P$ on $\mathcal{V}_r$
can be quantified by a matrix $M_{P,r}$ of size $(N-r+1)$  as follows. 
Given any $W\in\mathcal{W}_r$, one can write
$P (A_+^k W)=\sum_{\ell=0}^{N-r} M_{P,r}(k,\ell) A_+^\ell W$  in which $M_{P,r}(k,\ell)$ is independent of $W$.  Then $M_{P,r}(k,\ell)$ is the 
$(k,\ell)$th entry of $M_{P,r}$ and if $V=\sum_{\ell=0}^{N-r} c_\ell A_+^\ell W$ ($W\in \mathcal{W}_r$) then $PV= \sum_{k=0}^{N-r} d_k A_+^k 
W$ where $d_k=\sum M_{P,r}(k,\ell) c_\ell$. 
In particular,  if $[c_0,\dots, c_{N-r}]^T$ is a $\lambda$-eigenvector of $M_{P,r}$ then
$V$ is a $\lambda$-eigenvector of $P$.

The eigenvectors of $PQP$ have the form $HV$ where $V$ is an eigenvector of $QPQ$ where $P=P_K$ and $Q=Q_K$ for the 
same $K$.  Eigenvectors of  $QPQ$ in $\mathcal{V}_r$ can be obtained from those of the matrix $M_{QPQ,r}$ which is just the 
$(K-r+1)$-principal minor of $M_{P,r}$.  This is because if $V\in \mathcal{V}_r$, $V=\sum_{k=0}^{N-r} c_k A_+^k W$, $W\in\mathcal{W}
_r$ then $QV=\sum_{k=0}^{K-r} c_k A_+^k W$: $Q$ kills those terms supported in $\Sigma_\rho$ where $\rho>K$.
Then $PQ V=\sum_{k=0}^{N-r} d_k A_+^k W$ where $d_k=\sum_{\ell=0}^{K-r} M_{P,r} (k,\ell) c_\ell$ and, finally, 
 $QPQ V=\sum_{k=0}^{K-r} d_k A_+^k W$ for the same $\mathbf{d}$. That, is, the coefficients of the powers of $A_+^k W$ that appear in $QPQ 
 V$ are obtained by applying the principal minor of $M_{P,r}$ to the coefficients of $A_+^\ell W$ for $\ell\leq K-k$. 
 These observations give us the following.
 \begin{corollary}\label{cor:qpq_evec}  If $[c_0,\dots c_{K-r}]^T$ is a $\lambda$-eigenvector of the principal minor of size $(K-r+1)$ of the 
 matrix $M_{P,r}$   described above then $V=\sum_{k=0}^{K-r} c_k A_+^k W$, $W\in \mathcal{W}_r$, is a $\lambda$-eigenvector of QPQ
 and $HV$ is a $\lambda$-eigenvector of $PQP$.
 \end{corollary}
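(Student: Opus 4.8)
The plan is to assemble the corollary from two facts already in hand: $P$ preserves, with a $W$-independent matrix, the span of the vectors $\{A_+^k W\}$ for fixed $W\in\mathcal{W}_r$, and $Q=Q_K$ acts on such a span simply by truncating the spherical coefficients; composing these and conjugating by $\bar H$ then gives the statement.

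First I would fix $W\in\mathcal{W}_r$ and set $\mathcal{V}_r^W$ to be the span of $\{A_+^k W:0\le k\le N-r\}$. Since $W$ is supported on $\Sigma_r$ and $A_+$ raises the sphere index by one, $A_+^kW$ is supported on $\Sigma_{r+k}$; in particular these vectors have pairwise disjoint supports. By Prop.~\ref{prop:P_poly} we may write $P=p(A)$ with $A=A_++A_-$, and by Lem.~\ref{lem:vr_invariance} --- equivalently, by Thm.~\ref{thm:wradjacency}, which gives $A_+(A_+^kW)=A_+^{k+1}W$ and $A_-(A_+^kW)=m(r,k-1)\,A_+^{k-1}W$ --- the operator $A$, hence $p(A)=P$, maps $\mathcal{V}_r^W$ into itself. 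Passing to the coefficient vectors $[c_0,\dots,c_{N-r}]$, $A$ is represented by the single tridiagonal matrix $\widetilde A_r$ sending the $k$-th unit vector to the $(k+1)$-st plus $m(r,k-1)$ times the $(k-1)$-st --- with no reference to $W$ --- so $P$ is represented by the $W$-independent matrix $M_{P,r}=p(\widetilde A_r)$.

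Next I would identify $Q$ on $\mathcal{V}_r^W$. As $Q_K$ is multiplication by $\mathbbm{1}_{B(0,K)}$ and $A_+^kW$ lives on $\Sigma_{r+k}$, one has $Q(A_+^kW)=A_+^kW$ for $r+k\le K$ and $Q(A_+^kW)=0$ for $r+k>K$; thus in coefficient coordinates $Q$ is the projection retaining the entries indexed $0,\dots,K-r$ and zeroing the rest. Now take $V=\sum_{k=0}^{K-r}c_k A_+^kW$ as in the statement: $V$ is supported inside $B(0,K)$, so $QV=V$; its coefficient vector is $\mathbf c=(c_0,\dots,c_{K-r},0,\dots,0)^{T}$; $PQV=PV$ has coefficient vector $M_{P,r}\mathbf c$; and a final application of $Q$ keeps only the first $K-r+1$ entries. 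Since $c_\ell=0$ for $\ell>K-r$, this composite sends $(c_0,\dots,c_{K-r})^{T}$ to $(d_0,\dots,d_{K-r})^{T}$ with $d_k=\sum_{\ell=0}^{K-r}M_{P,r}(k,\ell)c_\ell$ --- that is, $QPQ$ acts on the truncated coefficient vector exactly by the leading $(K-r+1)\times(K-r+1)$ principal submatrix $\widehat M$ of $M_{P,r}$. Hence if $(c_0,\dots,c_{K-r})^{T}$ is a $\lambda$-eigenvector of $\widehat M$ then $\mathbf d=\lambda(c_0,\dots,c_{K-r})^{T}$, so $QPQ\,V=\sum_{k=0}^{K-r}d_k A_+^kW=\lambda V$ and $V$ is a $\lambda$-eigenvector of $QPQ$. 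Finally, from $\bar H(PQP)\bar H^{-1}=QPQ$ noted above, with $\bar H^{-1}=\bar H$, applying $\bar H$ to $QPQ\,V=\lambda V$ yields $PQP(\bar H V)=\lambda(\bar H V)$; as $HV=2^{N/2}\bar H V$, the vector $HV$ is a $\lambda$-eigenvector of $PQP$ as well.

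I expect no genuine obstacle: the substance is in Thm.~\ref{thm:wradjacency} and Prop.~\ref{prop:P_poly}, both proved, and the rest is assembly. The one point demanding care is the index bookkeeping --- the coefficient index $k$ of $A_+^kW$ is tied to the sphere $\Sigma_{r+k}$, so ``principal minor of size $K-r+1$'' must mean the leading block on the indices $0,\dots,K-r$, and one must truncate the input ($c_\ell=0$ for $\ell>K-r$) before applying $M_{P,r}$; it is precisely this truncation that collapses $QPQ$ to the leading principal submatrix rather than to the full action of $P$ followed by a cut. It is also worth recording, for the multiplicity statement in the introduction, that $M_{P,r}$ --- hence $\widehat M$ --- is independent of the choice of $W$, so a single $\lambda$-eigenvector of $\widehat M$ produces a $\lambda$-eigenvector of $QPQ$ for every $W\in\mathcal{W}_r$, accounting for an eigenspace of dimension $\binom{N}{r}-\binom{N}{r-1}$.
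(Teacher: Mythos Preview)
Your proposal is correct and follows essentially the same route as the paper: the argument given just before the corollary computes $QPQ\,V$ by observing that $Q$ truncates the spherical coefficients of $V\in\mathcal{V}_r$ to indices $0,\dots,K-r$, then $P$ acts via $M_{P,r}$, then $Q$ truncates again, yielding exactly the principal minor action, and conjugation by $\bar H$ converts to $PQP$. Your write-up is a bit more explicit about the index bookkeeping and the $W$-independence of $M_{P,r}$, but the substance is identical.
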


Computing the eigenspaces of $PQP$ then boils down to computing the eigenvectors of the principal minors of the
matrices $M_{P,r}$.  

In what follows, we outline a formal procedure to compute the matrices $M_{P,r}$.  First, one expresses $P$ in terms of the Lagrange interpolation
polynomial $p(A)$ as defined in (\ref{eq: pk_lagrange}).  Next, one observes that the matrix of a power of $A$ operating
on $\mathcal{V}_r$ is the power of the matrix of $A$ operating on $\mathcal{V}_r$, that is, $M_{A^n,r}=(M_{A,r})^n$, $n=0,1,2,\dots$.   Consequently, one can replace each occurrence of $A-(N-2j)I_{2^N}$ in (\ref{eq: pk_lagrange}) by
$M_{A,r}-(N-2j)I_{N-r+1}$ to obtain the matrix $M_{P,r}$.

One can represent the actions of $A_+$ and $A_-$ on $\mathcal{V}_r$ as coefficient mappings in the proof of Lem.~\ref{lem:vr_invariance}. In particular,
$A_+$ acts simply as a \emph{shift} since 
\[A_+\sum_{k=0}^{N-r} c_k A_+^k W=\sum_{k=0}^{N-r} c_k A_+^{k+1} W=\sum_{k=1}^{N-r} c_{k-1} A_+^{k} W\]
where the upper index of the sum remains $N-r$ because if $W\in \mathcal{W}_r$ then $A_+^{N-r+1} W=0$.
One can thus represent $A_+$ through the matrix of size $N-r+1$ that maps $[c_0,\dots, c_{N-r}]$ to $[0,c_0,\dots c_{N-r-1}]$.  This is just the matrix $M_{A_+}$ on $\mathbb{C}^{N-r+1}$ having ones on the diagonal below the main diagonal and zeros elsewhere.

On the other hand, for $W\in\mathcal{W}_r$, 
\[A_-\sum_{k=0}^{N-r} c_k A_+^k W 
=
\sum_{k=0}^{N-r} c_k m(r,k-1) A_+^{k-1} W=\sum_{k=0}^{N-r-1} c_{k+1} m(r,k) A_+^{k} W\, .
\]
The matrix of $A_-$ thus maps $[c_0,\dots, c_{N-r}]$ to $[m(r,0) c_1,\dots, m(r,N-r-1) c_{N-r},0]$.
This is just the matrix $M_{A_-}$ having the value $M_{A_-}(k,k+1)=m(r,k)$ on the diagonal above the main
diagonal and zeros elsewhere. 

One can write the matrix of $A$ on $\mathbb{C}^{N-r+1}$ as $M_A=M_{A_+}+M_{A_-}$.  The actions of powers of
$A$ on $\mathcal{V}_r$ can then be expressed in terms of the corresponding powers of $M_A$ on $\mathbb{C}^{N+1-r}$ and the bandlimiting operator $P=P_K$ acting on $\mathcal{V}_r$ can be expressed as the matrix $M_{P_r}$
obtained substituting $M_A$ for $A$ in the interpolating polynomial $p(A)$ in Prop.~\ref{prop:P_poly}.
We summarize how to compute the action of the operator $QPQ$ on $\mathcal{V}_r$, assuming $M_{P_r}$ has nondegenerate eigenvalues, as follows. 
Once the eigenspaces of $QPQ$ are computed by identifying the eigenspaces of $QPQ$ lying in $\mathcal{V}_r$
for each $r$, a complete eigenspace decomposition of $PQP$ is obtained by conjugating with the normalized
Hadamard matrix.

\noindent{\bf Algorithm to compute the matrix and eigenspaces of $QPQ$ on $\mathcal{V}_r$}

\noindent{\tt Step 1}:  Write the coefficient matrix $M_{A_+}$ of $A_+$ as the matrix of size $(N-r+1)$ with ones on the diagonal below the main
diagonal and zeros elsewhere.  Write the matrix $M_{A_-}$ of $A_-$ as the matrix of size $(N-r+1)$ with entries $M_{A_-}(k,k+1)=m(r,k)$, $k=0,\dots N-r$, and zeros elsewhere. The matrix $M_A$ of $A$ is $M_{A_+}+M_{A_-}$.

\noindent{\tt Step 2}: Compute the coefficient matrices $M_{P,r}$ by substituting $M_{A,r}-(N-2j)I_{N-r+1}$ for
each occurrence of $A-(N-2j)I_{2^N}$ in (\ref{eq: pk_lagrange}).

\noindent{\tt Step 3}:  Estimate the eigenvalues and eigenvectors of $M_{P,r}$ using standard numerical methods.
Each eigenspace has dimension $\binom{N}{r}-\binom{N}{r-1}$. A basis for each corresponding eigenspace  can be identified starting with a basis for $\mathcal{W}_r$ and forming the expansions $\sum_{k=0}^{K-r} c_k A_+^k W$
where $[c_0,\dots, c_{K-r}]^T$ is one of the  eigenvectors of $M_{P,r}$ and $W$ is one of the basis vectors for $\mathcal{W}_r$.

\smallskip\noindent{\bf Observation.} The adjacency matrix $A$ has norm $N$ and the norm of $M_{P,r}$ scales like $N^{\alpha N}$ for some $\alpha>1/2$. 
$M_{P,r}$ is poorly conditioned: {\tt matlab} returns {\tt Inf} for the condition number of $M_{P,1}$ when $N\geq 10$ and $K\leq 4$.

\begin{tiny}
\begin{table}[htp]
\caption{Matrix $M=M_{8,3,1}$ of $P$ on $\mathcal{V}_r$ with $(N,K,r)=(8,3,1)$}
\begin{center}
\begin{tabular}{| |c|c|c|c|c|c | c|c|| }
\hline 
    0.3437   & 0.1562 &   0.0156&   -0.0052 &  -0.0013 &   0.0003 &   0.0002 &        1\\
    0.9375  &  0.5000 &   0.0937 &   0.0000 &  -0.0026 &   0.0000 &   0.0003 &        1\\
    0.9375  &  0.9375 &   0.4062 &   0.0938 &   0.0078 &  -0.0026 &  -0.0013 &        1\\
   -3.7500  & 0.0000 &   1.1250 &   0.5000 &   0.0938 &   0.0000 &  -0.0052 &        1\\
  -11.2500  &  -3.7500 &   1.1250 &   1.1250 &   0.4062 &   0.0938 &   0.0156 &        1\\
   22.5000  & 0.0000  & -3.7500  &  0.0000 &   0.9375 &   0.5000  &  0.1562  &       1\\
  112.5000 &  22.5000 & -11.2500 &  -3.7500 &   0.9375 &   0.9375 &   0.3438 &        1\\
         0    &     0 &        0 &        0&         0 &        0  &       0  &       1 \\ \hline\hline
\end{tabular}
\end{center}
\label{tab:mhbdo831}
\end{table}%
\end{tiny}

\begin{tiny}
\begin{table}[htp]
\caption{Matrix $M=M_{8,3,2}$ of $P$ on $\mathcal{V}_r$ with $(N,K,r)=(8,3,2)$}
\begin{center}
\begin{tabular}{| |c|c|c|c|c|c | c|| }
\hline 
 0.3125  &  0.1875 &   0.0312 &  -0.0104 &  -0.0078  &  0.0029  &       0.0005 \\
    0.7500 &   0.5000 &   0.1250 &  -0.0000 &  -0.0104 &   0.0029 &        0.0006\\
    0.7500  &  0.7500    &0.3750    &0.1250    &0.0312    &0.0015       &  0\\
   -1.5000   &-0.0000    &0.7500   & 0.5000    &0.1875   & 0.0032       &  -0.0030\\
   -4.5000  & -1.5000    &0.7500   & 0.7500    &0.3125   & 0.0938         &0.0121\\
  0  & 0    &0  &0   &0   &0.4964       &  0.1754\\
         0        & 0        & 0        & 0      &   0         &1.0357         &0.4503 \\ \hline \hline

\end{tabular}
\end{center}
\label{tab:mp832}
\end{table}%
\end{tiny}

\begin{tiny}
\begin{table}[htp]
\caption{Matrix $M=M_{8,3,3}$ of $P$ on $\mathcal{V}_r$ with $(N,K,r)=(8,3,3)$}
\begin{center}
\begin{tabular}{| |c|c|c|c|c | c|| }
\hline 
  0.2500 &   0.2500   & 0.1250 &   0.0530  &  0.0234   & 0.0012\\
  0.5000   & 0.5000    &0.2500   & 0.0559  &   0.0250  &  0.0007\\
   0.5000    &0.5000   & 0.2500   &  0.1255  &  0.0014 &  -0.0021\\
0 & 0  & 0 &  0.4929 &   0.0311  &  0.0112\\
0  & 0 &  0  &  1.1723  &  0.7162 &   0.1409\\
   0        & 0      &   0       &   1.0173    &2.6598 &   0.3792\\ \hline\hline
\end{tabular}
\end{center}
\label{tab:mp833}
\end{table}%
\end{tiny}

\begin{figure}[ht]
\centering 
\includegraphics[width=3.5in,height=2.2in]{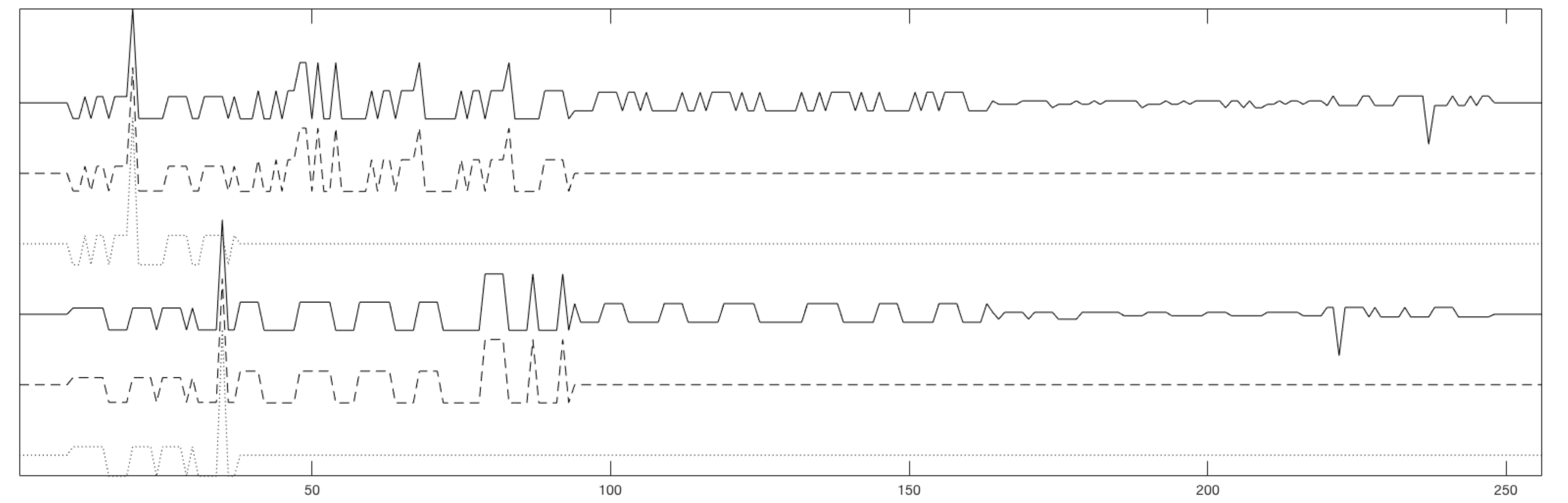}
\caption{Eigenvectors of $PQP$, $N=8$, $K=3$, $r=2$.  The dotted curves are plots of two different elements $W$ of $\mathcal{W}_r$ obtained by projecting a delta in $\Sigma_r$ onto $\mathcal{W}_r$ .  
The dashed curves are horizontal shifts of eigenvectors $V$ of $QPQ$
of the form $\sum c_k A_+^k W$  where $c_k$ form the principal eigenvector of $M_{r,K}$.  The solid curves are shifted multiples of $HV$ where $H$ is the Hadamard matrix. They are the corresponding eigenvectors of $PQP$.}
\label{fig:pqpevecsN8K3R2_proj_20_35}
\end{figure}

\bigskip
\noindent\textsc{Acknowledgement.} The second author would like to thank Beth Pollack for making this work possible and Anna Gilbert for suggesting the problem.
 \vfill\eject
 
 \thispagestyle{empty}

\bibliographystyle{plain}
\bibliography{boolean_eigenspace_refs.bib}

\begin{thebibliography}{10}

\bibitem{bass2013}
R.F. Bass and K.~Gr{\"o}chenig.
\newblock Relevant sampling of band-limited functions.
\newblock {\em Illinois J. of Math.}, 57(1):43--58, 2013.

\bibitem{Bonami2014229}
A.~Bonami and A.~Karoui.
\newblock Uniform bounds of prolate spheroidal wave functions and eigenvalues
  decay.
\newblock {\em C. R. Math. Acad. Sci. Paris}, 352(3):229 -- 234, 2014.

\bibitem{Bonami2016}
A.~Bonami and A.~Karoui.
\newblock Uniform approximation and explicit estimates for the prolate
  spheroidal wave functions.
\newblock {\em Constructive Approximation}, 43(1):15--45, 2016.

\bibitem{boyd_pswf_2003_2}
J.P. Boyd.
\newblock Large mode number eigenvalues of the prolate spheroidal differential
  equation.
\newblock {\em Appl. Math. Comput.}, 145:881--886, 2003.

\bibitem{boyd_2005}
J.P. Boyd.
\newblock Algorithm 840: computation of grid points, quadrature weights and
  derivatives for spectral element methods using prolate spheroidal wave
  functions---prolate elements.
\newblock {\em ACM Trans. Math. Software}, 31:149--165, 2005.

\bibitem{CPA:CPA21455}
E.J. Cand\`es and C.~Fernandez-Granda.
\newblock Towards a mathematical theory of super-resolution.
\newblock {\em Comm. Pure and Appl. Math.}, 67(6):906--956, 2014.

\bibitem{davenport_wakin_2012}
M.A. Davenport and M.B. Wakin.
\newblock Compressive sensing of analog signals using discrete prolate
  spheroidal sequences.
\newblock {\em Appl. Comput. Harmon. Anal.}, 33(3):438 -- 472, 2012.

\bibitem{farrell_strohmer_2011}
B.~Farrell and T.~Strohmer.
\newblock Eigenvalue estimates and mutual information for the linear
  time-varying channel.
\newblock {\em IEEE Trans. Inform. Theory}, 57(9):5710--5719, 2011.

\bibitem{grunbaum_toeplitz}
F.~A. Gr\"unbaum.
\newblock Eigenvectors of a {T}oeplitz matrix: Discrete version of the prolate
  spheroidal wave functions.
\newblock {\em SIAM Journal on Algebraic Discrete Methods}, 2(2):136--141,
  1981.

\bibitem{grunbaum_linalg_1981}
F.A. Gr{\"u}nbaum.
\newblock Toeplitz matrices commuting with tridiagonal matrices.
\newblock {\em Linear Algebra Appl.}, 40:25--36, 1981.

\bibitem{hogan_lakey_tbl}
{J}.{A}. {H}ogan and {J}.{D}. {L}akey.
\newblock {\em {D}uration and {B}andwidth {L}imiting. {P}rolate {F}unctions,
  {S}ampling, and {A}pplications.}
\newblock {B}irkh{\"a}user, {B}oston, {M}{A}, 2012.

\bibitem{Hogan20172}
J.A. Hogan and J.D. Lakey.
\newblock An analogue of {S}lepian vectors on boolean hypercubes.
\newblock {\em J. Fourier Anal. Appl.}, pages 1--17, 2018.
\newblock Online first.

\bibitem{jain_ranganath_1981}
A.K. Jain and S.~Ranganath.
\newblock Extrapolation algorithms for discrete signals with application in
  spectral estimation.
\newblock {\em IEEE Trans. Acoust. Speech Signal Process.}, 29:830--845, 1981.

\bibitem{landau_pollak_II}
H.J. Landau and H.O. Pollak.
\newblock Prolate spheroidal wave functions, {F}ourier analysis and
  uncertainty. {II}.
\newblock {\em Bell System Tech. J.}, 40:65--84, 1961.

\bibitem{landau_pollak_III}
H.J. Landau and H.O. Pollak.
\newblock Prolate spheroidal wave functions, {F}ourier analysis and
  uncertainty. {III}. {T}he dimension of the space of essentially time- and
  band-limited signals.
\newblock {\em Bell System Tech. J.}, 41:1295--1336, 1962.

\bibitem{landau_widom}
H.J. Landau and H.~Widom.
\newblock Eigenvalue distribution of time and frequency limiting.
\newblock {\em J. Math. Anal. Appl.}, 77:469--481, 1980.

\bibitem{Osipov2014108}
A.~Osipov and V.~Rokhlin.
\newblock On the evaluation of prolate spheroidal wave functions and associated
  quadrature rules.
\newblock {\em Appl. Comput. Harmonic Anal.}, 36(1):108--142, 2014.

\bibitem{osipov2013prolate}
A.~Osipov, V.~Rokhlin, and H.~Xiao.
\newblock {\em Prolate Spheroidal Wave Functions of Order Zero: Mathematical
  Tools for Bandlimited Approximation}.
\newblock Applied Mathematical Sciences. Springer-Verlag GmbH, 2013.

\bibitem{sc12-1}
{S} {S}chmutzhard.
\newblock {\em {G}alerkin methods for the numerical evaluation of the prolate
  spheroidal wave functions}.
\newblock PhD thesis, University of Vienna, 2012.

\bibitem{slepian_IV}
D.~Slepian.
\newblock Prolate spheroidal wave functions, {F}ourier analysis and
  uncertainty. {IV}. {E}xtensions to many dimensions; generalized prolate
  spheroidal functions.
\newblock {\em Bell System Tech. J.}, 43:3009--3057, 1964.

\bibitem{slepian_V_1978}
D.~Slepian.
\newblock {Prolate spheroidal wave functions, {F}ourier analysis, and
  uncertainty. {V} - {T}he discrete case}.
\newblock {\em Bell System Tech. J.}, 57:1371--1430, 1978.

\bibitem{slepian_pollak_I}
D.~Slepian and H.O. Pollak.
\newblock Prolate spheroidal wave functions, {F}ourier analysis and
  uncertainty. {I}.
\newblock {\em Bell System Tech. J.}, 40:43--63, 1961.

\bibitem{simons_2012}
D.C. Slobbe, F.J. Simons, and R.~Klees.
\newblock The spherical {S}lepian basis as a means to obtain spectral
  consistency between mean sea level and the geoid.
\newblock {\em Journal of Geodesy}, 86(8):609--628, 2012.

\bibitem{tsitsvero_etal_2015_2}
M.~{Tsitsvero} and S.~{Barbarossa}.
\newblock On the degrees of freedom of signals on graphs.
\newblock {\em 2015 23rd European Signal Processing Conference (EUSIPCO)},
  pages 1506--1510, 2015.

\bibitem{tsitsvero_etal_2015}
M.~{Tsitsvero}, S.~{Barbarossa}, and P.~{Di Lorenzo}.
\newblock Signals on graphs: Uncertainty principle and sampling.
\newblock {\em IEEE Trans. Signal Process.}, 64:4845--4860, 2016.

\bibitem{wieczorek_simons}
M.A. {Wieczorek} and F.J. {Simons}.
\newblock {Localized spectral analysis on the sphere}.
\newblock {\em Geophys. J. Int.}, 162:655--675, 2005.

\bibitem{xiao_rokhlin_yarvin_pswf}
H.~Xiao, V.~Rokhlin, and N.~Yarvin.
\newblock Prolate spheroidal wavefunctions, quadrature and interpolation.
\newblock {\em Inverse Problems}, 17:805--838, 2001.

\bibitem{xu_chamzas_dpss}
W.Y. Xu and C.~Chamzas.
\newblock On the periodic discrete prolate spheroidal sequences.
\newblock {\em SIAM J. Appl. Math.}, 44:1210--1217, 1984.

\bibitem{zemen_mecklenbraucker_2005}
T.~Zemen and C.F. Mecklenbr{\"a}uker.
\newblock Time-variant channel estimation using discrete prolate spheroidal
  sequences.
\newblock {\em IEEE Trans. Signal Process.}, 53:3597--3607, 2005.

\bibitem{zhu_etal_2017}
Z.~Zhu, S.~Karnik, M.~A. Davenport, J.~Romberg, and M.~B. Wakin.
\newblock The eigenvalue distribution of discrete periodic time-frequency
  limiting operators.
\newblock {\em IEEE Signal Processing Letters}, 25(1):95--99, Jan 2018.

\bibitem{zhu_wakin_abelian}
Z.~Zhu and M.B. Wakin.
\newblock Time-limited {T}oeplitz operators on abelian groups: Applications in
  information theory and subspace approximation.
\newblock {\em CoRR}, abs/1711.07956, 2017.

\end{thebibliography}

\end{document}